\documentclass[preprint, 12pt]{elsarticle}

\usepackage[utf8]{inputenc}
\usepackage{amsmath, amssymb, amsthm}
\usepackage{mathtools}
\usepackage{hyperref}
\usepackage{tcolorbox}
\usepackage{thmtools}
\usepackage[shortlabels]{enumitem}

\numberwithin{equation}{section}

\newtheorem{theorem}{Theorem}[section]
\newtheorem{lemma}[theorem]{Lemma}
\newtheorem{proposition}[theorem]{Proposition}

\theoremstyle{definition}
\newtheorem{definition}[theorem]{Definition}
\newtheorem{remark}[theorem]{Remark}

\hypersetup{
    colorlinks=true,
    linkcolor=blue,
    citecolor=blue,
    urlcolor=blue,
    pdfborder={0 0 0}
}

\DeclarePairedDelimiter{\abs}{\lvert}{\rvert}       % absolute value
\DeclarePairedDelimiter{\n}{\lVert}{\rVert}         % norm
\DeclarePairedDelimiter{\ip}{\langle}{\rangle}      % inner product
\DeclarePairedDelimiter{\set}{\{}{\}}               % set

\newcommand{\newterm}{\emph}                        % new term
\newcommand{\C}{\mathbb{C}}                         % complex
\newcommand{\R}{\mathbb{R}}                         % reals
\newcommand{\N}{\mathbb{N}}                         % naturals
\newcommand{\Hi}{\mathcal{H}}                       % Hilbert space
\newcommand{\B}{{\mathcal{B}(\Hi)}}                 % bounded operators
\newcommand{\Tc}{{\mathcal{S}^1(\Hi)}}              % trace class
\newcommand{\Hs}{{\mathcal{S}^2(\Hi)}}              % Hilbert-Schmidt class
\newcommand{\A}{{\mathcal{A}(\Hi)}}                 % admissible operators
\newcommand{\D}{D^{-1}}                             % inverse duflo moore
\newcommand{\g}{\mathfrak{g}}                       % lie algebra
\newcommand{\z}{\mathfrak{z}}                       % lie algebra center
\newcommand{\Bo}{\mathfrak{B}}                      % Borel sigma algebra
\newcommand{\vp}{\varphi}                           % varphi
\newcommand{\sumn}{\sum_{n \in \N}}                 % sum over naturals n
\newcommand{\limk}{\lim_{k \to \infty}}             % limit as k to infinity
\newcommand{\seqn}[1]{(#1)_{n \in \N}}              % sequence over naturals n
\newcommand{\seqk}[1]{(#1)_{k \in \N}}              % sequence over naturals k
\newcommand{\bek}{\beta_{E_k}}                      % beta function for E_k
\newcommand{\beki}{\beta_{E_k^{-1}}}                % beta function for inverse E_k

\DeclareMathOperator{\dom}{dom}                     % domain
\DeclareMathOperator{\tr}{tr}                       % trace

\newcommand{\proofstep}[2][1]{
    \par\noindent
    \underline{Step #1:}\; \emph{#2}
    \newline
}

\begin{document}

\begin{frontmatter}

\title{Eigenvalue accumulation for operator convolutions on locally compact groups}

\author{Florian Schroth}
\ead{schroth@mathga.rwth-aachen.de}
\affiliation{organization={Chair for Geometry and Analysis, RWTH Aachen University},
            addressline={Pontdriesch 10-12}, 
            city={Aachen},
            postcode={52062}, 
            state={North Rhine-Westphalia},
            country={Germany}}

\begin{abstract}

Within the framework of quantum harmonic analysis, for a locally compact group $G$ with a square-integrable, irreducible unitary representation, we analyze the eigenvalue distributions of convolutions between indicator functions on $G$ and a fixed density operator on the representation space, a concept which generalizes localization operators. In particular, we consider a sequence of such operators and the asymptotic number of eigenvalues that lie within a small distance of $1$. We show that a previously postulated type of asymptotic behavior occurs if and only if the group is unimodular and the sets underlying the indicator functions form a Følner sequence. Applying this, we obtain positive results for nilpotent and homogeneous Lie groups, recovering an established result for the Heisenberg group as a special case.

\end{abstract}

\begin{keyword}
Quantum harmonic analysis \sep Locally compact groups \sep Følner sequences \sep Operator convolutions \sep Localization operators

\MSC[2020] 43A30 \sep 43A65 \sep 43A07 \sep 47B90 \sep 47A75
\end{keyword}

\end{frontmatter}

%%%%%%%%%%%%%%%%%%%%%%%%%%%%%%%%%%%%%%%%%%%%%%%%%%%%%%%%%%%%%%%%%%%%%%%
% START OF CONTENT %
%%%%%%%%%%%%%%%%%%%%%%%%%%%%%%%%%%%%%%%%%%%%%%%%%%%%%%%%%%%%%%%%%%%%%%%

\section{Introduction}\label{sec:intro}

In a 2023 article \cite{Halvdansson}, Simon Halvdansson developed the theory of operator convolutions in the general setting of a locally compact group $G$ with a square-integrable, irreducible unitary representation $\pi$. The conception of such convolutions dates back to Werner \cite{Werner}, who introduced the idea for a projective unitary representation of a symplectic space in the context of theoretical quantum physics. From a purely mathematical standpoint, Berge et al. \cite{QHAAffine} treated the affine group as a first non-unimodular example, shortly before Halvdansson addressed the abstract case. In particular, he gave a detailed analysis of the convolution between a complex-valued function on the group $G$ and an operator on the representation space $\Hi$, and the one between two operators on $\Hi$. The function-operator convolution yields another operator on $\Hi$ and is defined (in the weak sense) by
\[
    f * T := \int_G f(x) \pi(x)^* T \pi(x) d\mu(x).
\]
In the case where $T$ has rank one, the convolution reduces to a localization operator, which is well-studied in the literature (see, e.g., \cite{LocalizationOps3}, \cite{LocalizationOps1}, \cite{LocalizationOps2}). The operator-operator convolution results in a function on $G$ defined via the trace as
\[
    (S * T)(x) := \tr(S \pi(x)^* T \pi(x)).
\]
Most notably, the question of integrability of such a convolution becomes more delicate in the non-unimodular case and \cite{QHAAffine} established that the concept of an admissible operator is required (similar to that of an admissible vector of the representation space).

We focus on one of the many interesting applications described in \cite{Halvdansson}, namely the distribution of the eigenvalues of a convolution $E * S := \chi_E * S$ of a density operator with an indicator function. In particular, if we consider a sequence $\seqk{E_k}$ of subsets of $G$ and the eigenvalues $\seqn{\lambda_n^{(k)}}$ of $E_k * S$, we are interested in the asymptotic behavior of the numbers $\#\set{n \mid \lambda_n^{(k)} > 1-\delta}$, counting those eigenvalues that have a small distance to $1$ (the eigenvalues will always be bounded from above by $1$). Such quantities have been investigated in the case of the Heisenberg group in \cite{MainResHeisenberg1} and \cite{MainResHeisenberg2}, and the results achieved there appear as special cases of our main theorem. Related work has also been done in \cite{DifferentRegionsforEV} for counting eigenvalues inside regions other than $(1-\delta, 1]$, where strict bounds instead of asymptotic behavior were obtained. Halvdansson \cite[Theorem 5.14]{Halvdansson} stated that, in the case of the affine group, $\#\set{n \mid \lambda_n^{(k)} > 1-\delta}$ grows asymptotically like $\tr(S) \mu(E_k)$ if the sets $E_k := \Gamma_{r_k}(E)$ are obtained by scaling a fixed set $E \subseteq G$ using a sequence of dilations
\[
    \Gamma_{r_k}: G \to G, \, (x, a) \mapsto (r_k x, a^{r_k}),
\]
where $\limk r_k = \infty$. We will disprove this claim by showing that, under mild assumptions on the group $G$ and the sequence $\seqk{E_k}$, a limit of the form
\[
    \limk \frac{\#\set{n \mid \lambda_n^{(k)} > 1-\delta}}{\tr(S) \mu(E_k)} = 1
\]
for all $\delta > 0$ is possible if and only if $G$ is unimodular and $\seqk{E_k}$ is a Følner sequence.

Recalling that simply connected, connected, nilpotent Lie groups are unimodular and amenable, we apply the first implication to such groups, where the sets $\seqk{E_k}$ will be growing balls with respect to some word metric. Following a remark in \cite{Halvdansson}, we also consider homogeneous groups (which include stratified Lie groups like the Heisenberg group), where we use dilations of a fixed set for $\seqk{E_k}$. We introduce a slight modification to the definition of the operator convolutions in the nilpotent setting, which is necessary to accommodate an irreducible representation that is only square-integrable modulo the group's center. As an example, we then recover the corresponding result for the Heisenberg group.

The article is organized as follows: In Section \ref{sec:prel}, we will give an overview of the mathematical objects that are of interest here, recall some well-known facts without proof, and introduce the necessary notation. Section \ref{sec:folner} briefly treats Følner sequences in locally compact groups, establishes a few facts that are needed for the main result, and outlines how to construct Følner sequences in nilpotent and homogeneous Lie groups. In Section \ref{sec:opconv}, operator convolutions are defined precisely, we recall some of the properties shown in \cite{Halvdansson}, and supplement what is additionally needed. Further, we present a way to view both kinds of operator convolutions as representation-theoretic objects related to the adjoint representation of $\pi$. Section \ref{sec:main} is concerned with the proof of our main result, Theorem \ref{thm:main}, while Section \ref{sec:appl} describes how to apply it to nilpotent and homogeneous Lie groups.

Note that parts of this article, more precisely, the elaboration on operator convolutions, the first implication of Theorem \ref{thm:main}, and its application to stratified Lie groups, stem from the author's recently completed Master's thesis.

%%%%%%%%%%%%%%%%%%%%%%%%%%%%%%%%%%%%%%%%%%%%%%%%%%%%%%%%%%%%%%%%%%%%%%%
%%%%%%%%%%%%%%%%%%%%%%%%%%%%%%%%%%%%%%%%%%%%%%%%%%%%%%%%%%%%%%%%%%%%%%%

\section{Preliminaries and Notation}\label{sec:prel}

\subsection{Trace class operators on a Hilbert space}\label{sub:traceclass}

Throughout this entire article, $\Hi$ will denote a separable Hilbert space over the field $\C$ of complex numbers. The inner product $\ip{\cdot, \cdot}$ on $\Hi$ and the induced norm $\n{\cdot}$, are written without subscript. $\B$ is the space of bounded operators on $\Hi$ with the operator norm $\n{\cdot}_\B$. If $T \in \B$, then $T^*$ denotes the adjoint operator of $T$ and $\abs{T} := (T^* T)^{1/2}$ its modulus. If $\psi, \vp \in \Hi$, then we define the \newterm{rank-one tensor} $\psi \otimes \vp$ as the operator on $\Hi$ mapping $\xi \mapsto \ip{\xi, \vp} \psi$.

Further interesting for us is the trace class on $\Hi$, which, in a certain sense, will replace $L^1(G)$ when moving from the usual function convolution to operator convolutions. For a more detailed description and proofs of the following facts, see \cite{Schatten}. Fix an orthonormal basis $\seqn{\vp_n}$ of $\Hi$ and define $\Tc$, the \newterm{trace class}, to be the set of all operators $S \in \B$ satisfying
\[
    \n{S}_\Tc := \sumn \ip{\abs{S}\vp_n, \vp_n} < \infty.
\]
$\Tc$ together with the \newterm{trace norm} $\n{\cdot}_\Tc$ is a Banach space and a two-sided norm ideal in $\B$, meaning for every $S \in \Tc$ and $T \in B(\Hi)$, we have $ST, TS \in \Tc$ and $\n{ST}_\Tc, \n{TS}_\Tc \leq \n{T}_\B \n{S}_\Tc$. For every $S \in \Tc$, the sum
\[
    \tr(S) := \sumn \ip{S\vp_n, \vp_n}
\]
is called the \newterm{trace} of $S$. It is absolutely convergent and $\tr$ defines a bounded, linear functional on $\Tc$ with norm $1$. The trace is commutative in the sense that $\tr(TS) = \tr(ST)$ for $S \in \Tc$ and $T \in \B$. The trace class, trace norm, and trace are all independent of the chosen orthonormal basis $\seqn{\vp_n}$.

Every rank-one tensor $\psi \otimes \vp$ is a trace class operator with 
\[
    \n{\psi \otimes \vp}_\Tc = \n{\psi} \n{\vp}, \quad\quad \tr(\psi \otimes \vp) = \ip{\psi, \vp}.
\]
In fact, the span of all such tensors is dense in $\Tc$ with respect to the trace norm, and we have the following singular value decomposition. Here, $\ell^1(\N)$ denotes the space of absolutely summable complex sequences.

\begin{theorem}\label{thm:svd}
    An operator $S \in \B$ belongs to $\Tc$ if and only if there exist orthonormal bases $\seqn{\psi_n}$ and $\seqn{\vp_n}$ and a non-increasing sequence $\seqn{s_n} \in \ell^1(\N)$ of non-negative real numbers such that
    \begin{equation}\label{eq:svd}
        S = \sumn s_n (\psi_n \otimes \vp_n)
    \end{equation}
    with strong operator convergence. In this case, $\seqn{s_n}$ is unique, \eqref{eq:svd} converges even with respect to $\n{\cdot}_\Tc$, and we have $\n{S}_\Tc = \n{\seqn{s_n}}_{\ell^1}$ and
    \[
        \tr(S) = \sumn s_n \ip{\psi_n, \vp_n}.
    \]
    If $S$ is additionally non-negative, then $\seqn{\psi_n} = \seqn{\vp_n}$ and $\seqn{s_n}$ are the eigenvalues of $S$, counted with multiplicity.
\end{theorem}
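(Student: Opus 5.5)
The plan is to derive everything from the polar decomposition and the spectral theorem for compact self-adjoint operators. Two facts are needed first: every trace class operator is compact, and the trace norm can be written as $\n{S}_\Tc = \tr\abs{S}$.

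\emph{Forward direction.} Suppose $S \in \Tc$. Since $\abs{S}$ has finite trace, $\abs{S}^{1/2}$ is Hilbert--Schmidt, because $\sumn \n{\abs{S}^{1/2}\vp_n}^2 = \sumn \ip{\abs{S}\vp_n,\vp_n} < \infty$. Hence $\abs{S}^{1/2}$ is compact, and so are $\abs{S}$ and $S = U\abs{S}$, where $S = U\abs{S}$ is the polar decomposition with $U$ a partial isometry.
\begin{itemize}
\item Apply the spectral theorem to the compact non-negative operator $\abs{S}$. This gives an orthonormal basis $\seqn{\vp_n}$ of eigenvectors with non-increasing eigenvalues $s_n \geq 0$, and $\sumn s_n = \tr\abs{S} = \n{S}_\Tc < \infty$.
\item Set $\psi_n := U\vp_n$ whenever $s_n > 0$. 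These vectors are orthonormal, since $U$ is isometric on $\overline{\operatorname{ran}}\abs{S}$. Complete them to an orthonormal basis. In the non-separable-defect case (the kernel is finite-dimensional while the complement of the range of $U$ is not), reindex by interleaving. If $\Hi$ is finite-dimensional, work with finite sequences.
\item Since $S\xi = U\abs{S}\xi = \sumn s_n \ip{\xi,\vp_n}\psi_n$, equation \eqref{eq:svd} holds in the strong sense.
\end{itemize}
The step I expect to be the main technical nuisance is arranging for \emph{both} families to be orthonormal bases rather than merely orthonormal systems. The kernel of $\abs{S}$ and the orthogonal complement of $\operatorname{ran} U$ can have different dimensions, so I plan to pad with zero singular values. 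These can be placed anywhere in an infinite sequence only if monotonicity is kept, so the nonzero $s_n$ come first and the zeros fill the tail. If there are finitely many nonzero $s_n$, the tail is all zeros and the two complements are both infinite-dimensional or have matching codimension. I expect this to be a bookkeeping argument rather than a genuine obstacle.

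\emph{Converse and norm convergence.} Given such a decomposition with $\seqn{s_n} \in \ell^1(\N)$, each term satisfies $\n{\psi_n\otimes\vp_n}_\Tc = 1$. The series therefore converges absolutely in the Banach space $\Tc$. Its trace-norm limit agrees with the strong limit, because $\n{\cdot}_\B \leq \n{\cdot}_\Tc$. Hence $S \in \Tc$, and we get $\n{S}_\Tc \leq \sum s_n$.

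The reverse inequality is obtained by computing $S^*S = \sum s_n^2\, \vp_n\otimes\vp_n$, so $\abs{S} = \sum s_n\, \vp_n\otimes\vp_n$. Evaluating $\tr\abs{S}$ in the basis $\seqn{\vp_n}$ then gives equality. This also identifies the $s_n$ as the eigenvalues of $\abs{S}$ listed in non-increasing order, which yields uniqueness. The trace formula follows from continuity and linearity of $\tr$ on $\Tc$ together with $\tr(\psi\otimes\vp) = \ip{\psi,\vp}$.

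\emph{Non-negative case.} If $S \geq 0$, then $\abs{S} = S$ and $U$ restricted to $\overline{\operatorname{ran}} S$ is the identity. So we may take $\psi_n = \vp_n$, and the $s_n$ are exactly the eigenvalues of $S$, counted with multiplicity.
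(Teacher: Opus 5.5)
The paper gives no proof of this theorem; it only cites Schatten. Your route (polar decomposition, spectral theorem for the compact operator $\abs{S}$, absolute convergence in $\Tc$ for the converse, and $\abs{S} = \sumn s_n\, \vp_n \otimes \vp_n$ for the norm identity and uniqueness) is the standard one. The converse, the norm identity, uniqueness and the trace formula are fine. So is the forward direction when $S$ has finite rank or when $\ker S = \ker S^* = \set{0}$.

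\textbf{The gap.} The step you dismiss as bookkeeping fails in general: completing both families to orthonormal bases over a common index set, with $\seqn{s_n}$ non-increasing. Suppose $S = \sumn s_n(\psi_n \otimes \vp_n)$ with both families orthonormal bases. Then $S\vp_n = s_n\psi_n$ and $S^*\psi_n = s_n\vp_n$, hence $\abs{S}\vp_n = s_n\vp_n$. It follows that $\set{\vp_n \mid s_n = 0}$ and $\set{\psi_n \mid s_n = 0}$ are orthonormal bases of $\ker S$ and $\ker S^*$, indexed by the \emph{same} set. Two consequences:
\begin{enumerate}[(i)]
    \item $\dim\ker S = \dim\ker S^*$ is necessary. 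Interleaving cannot fix a mismatch, because the obstruction is how the indices are paired, not their order. For example, the weighted shift $Se_n = 2^{-n}e_{n+1}$ is trace class with $\ker S = \set{0}$, while $\ker S^*$ is spanned by the first basis vector.
    \item If infinitely many $s_n$ are positive, a non-increasing sequence contains no zeros at all. Then ``the zeros fill the tail'' is vacuous, and $\ker S = \set{0}$ is forced. Thus the non-negative trace class operator $\operatorname{diag}(0, 1, \tfrac14, \tfrac19, \dots)$ admits no decomposition of the asserted form. Your first bullet already fails for it: there is no orthonormal basis of eigenvectors of $\abs{S}$ listed with non-increasing eigenvalues.
\end{enumerate}

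So the statement, read literally, is false for trace class operators of infinite rank with nontrivial kernel or cokernel, and no argument can close this gap. The correct formulation replaces ``orthonormal bases'' by ``orthonormal systems'', with the sum running over the nonzero singular values. For that version your proof works verbatim and needs no padding. Take an orthonormal basis of $\overline{\operatorname{ran}}\,\abs{S}$ consisting of eigenvectors $\vp_n$ for the nonzero eigenvalues, and set $\psi_n := U\vp_n$. In the non-negative case this gives $\psi_n = \vp_n$, with the $s_n$ the nonzero eigenvalues counted with multiplicity.

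This weaker version is all the paper uses later. Lemma \ref{la:ooconvcont} and Theorem \ref{thm:ooconvl1} only need trace-norm convergence and $\n{\psi_n}\n{\vp_n} \le 1$. The eigenvalue sums in Section \ref{sec:main} only involve functions vanishing at $0$.
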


If $T \in \B$ is normal and $\rho$ is a bounded, complex-valued Borel function defined on the spectrum of $T$, we denote by $\rho(T)$ the bounded operator that is defined by applying $\rho$ to $T$ via the functional calculus. In the special case where $S \in \Tc$ is non-negative with eigenvalues $\seqn{s_n}$, and where $\rho$ is a complex-valued function defined on $\set{s_n}_{n \in \N}$ such that $\seqn{\rho(s_n)} \in \ell^1(\N)$, then $\rho(S) \in \Tc$ and
\begin{equation}\label{eq:tracefctcalc}
    \tr(\rho(S)) = \sumn \rho(s_n).
\end{equation}

%%%%%%%%%%%%%%%%%%%%%%%%%%%%%%%%%%%%%%%%%%%%%%%%%%%%%%%%%%%%%%%%%%%%%%%

\subsection{Abstract harmonic analysis on locally compact groups}\label{sub:aha}

The discussion in Sections \ref{sec:folner}, \ref{sec:opconv}, and \ref{sec:main} will take place on a locally compact group $G$ and, for convenience, we will assume throughout the entire article that $G$ is $\sigma$-compact. The results we recall here are standard and can be found, for example, in \cite{AHA}. We denote the identity element of $G$ by $e$, and for $x \in G$ and subsets $X, Y \subseteq G$, we use the shorthands
\[
    xY := \set{xy \mid y \in Y}, \quad Yx := \set{yx \mid y \in Y}, \quad XY := \set{xy \mid x \in X, y \in Y},
\]
\[
    X^m := \underbrace{X \dots X}_{m\text{-times}}, \quad X^{-1} := \set{x^{-1} \mid x \in X}.
\]
$\Bo(G)$ is the Borel $\sigma$-algebra generated by the open sets of $G$ and $\mu$ will always denote a fixed \newterm{right Haar measure} of $G$, i.e. a Radon measure on $\Bo(G)$ satisfying $\mu(Ex) = \mu(E)$ for all $E \in \Bo(G)$ and all $x \in G$. For $1 \le p \le \infty$, $L^p(G)$ denotes the $p$-Lebesgue space on $G$ with respect to the measure $\mu$, equipped with the usual norm. The \newterm{modular function} of $G$ is defined as
\[
    \Delta_G: G \to \R_{>0}, x \mapsto \frac{\mu(x^{-1} E)}{\mu(E)}
\]
for some $E \in \Bo(G)$ with $0 < \mu(E) < \infty$, where the definition does not depend on $E$. $\Delta_G$ is a continuous group homomorphism and gives the substitution rules
\[
    \int_G f(y) d\mu(y) = \int_G \Delta_G(y) f(y^{-1}) d\mu(y)
\]
and
\[
    \int_G f(y) d\mu(y) = \Delta_G(x) \int_G f(x^{-1} y) d\mu(y)
\]
for $f \in L^1(G)$ and $x \in G$. If $\Delta_G \equiv 1$, we call the group $G$ \newterm{unimodular}.

Essential for the definition of operator convolutions in Section \ref{sec:opconv} is the existence of a square-integrable, irreducible unitary representation of $G$ on some separable Hilbert space $\Hi$, which we denote by $\pi: G \to \mathcal{U}(\Hi)$. For $\psi, \vp \in \Hi$, we call the function
\[
    C_{\psi, \vp}: G \to \C, x \mapsto \ip{\pi(x)\psi, \vp}
\]
the \newterm{matrix coefficient} of $\psi$ and $\vp$ with respect to $\pi$. Then $\pi$ is called \newterm{irreducible} if for all $\psi, \vp \in \Hi$, $C_{\psi, \vp} \equiv 0$ implies $\psi = 0$ or $\vp = 0$, and it is called \newterm{square-integrable} if there exist $\psi, \vp \in \Hi\setminus\set{0}$ such that $C_{\psi, \vp} \in L^2(G)$.

The following classical result about such representations is due to Duflo and Moore \cite{DufloMoore}.

\begin{theorem}\label{thm:duflomoore}
    Let $\pi$ be an irreducible, square-integrable representation of $G$. Then there exists a unique densely defined, self-adjoint, positive operator $D$ on $\Hi$ with densely defined inverse $\D$ such that $C_{\psi, \vp} \in L^2(G)$ if and only if $\psi \in \Hi$ and $\vp \in \dom(\D)$. For all $\psi_1, \psi_2 \in \Hi$ and $\vp_1, \vp_2 \in \dom (\D)$, we have the orthogonality relation
    \[
        \ip{C_{\psi_1, \vp_1}, C_{\psi_2, \vp_2}}_{L^2 (G)} = \ip{\psi_1, \psi_2} \overline{\ip{\D \vp_1, \D \vp_2}}.
    \]
    Further, $\dom(D)$ is invariant under $\pi(x)$ with
    \[
        \pi(x) D = \Delta_G(x)^{-1/2} D \pi(x)
    \]
    on $\dom(D)$ for all $x \in G$.
\end{theorem}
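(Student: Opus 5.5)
We follow the classical Duflo–Moore strategy: first build $D^{-1}$ as a closed quadratic form on the set of admissible vectors, then read off the orthogonality relation by Schur-type arguments, and finally obtain the covariance from the translation behaviour of $L^2(G)$. Throughout we write $\rho$ for the right regular representation, $(\rho(y)f)(x)=f(xy)$, which is unitary on $L^2(G)$ since $\mu$ is right invariant.

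\textbf{Step 1: the admissible vectors form a dense invariant subspace.} Set $\mathcal{D}:=\set{\vp\in\Hi \mid C_{\psi,\vp}\in L^2(G)\text{ for all }\psi\in\Hi}$. We have
\[
C_{\pi(y)\psi,\vp}(x)=C_{\psi,\vp}(xy),
\]
so $C_{\pi(y)\psi,\vp}=\rho(y)C_{\psi,\vp}$. Square-integrability gives some $\psi_0,\vp_0\neq 0$ with $C_{\psi_0,\vp_0}\in L^2(G)$. The set of $\psi$ with $C_{\psi,\vp_0}\in L^2$ is a $\pi$-invariant subspace containing $\psi_0$, hence dense by irreducibility. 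A closed graph argument then shows it is all of $\Hi$. Concretely, the map $\psi\mapsto C_{\psi,\vp_0}$ is closed, because $L^2$ convergence and pointwise convergence identify the limit, so it is bounded once everywhere defined. To reach everywhere-definedness, I would first work on the dense domain, use that the closure of a closable $\pi$–$\rho$ intertwiner has a $\pi$-invariant domain, and conclude that this domain is all of $\Hi$. Hence $\vp_0\in\mathcal{D}$. Next, $\mathcal{D}$ is a subspace, and it is invariant under each $\pi(x)$ because
\[
C_{\psi,\pi(x)\vp}(y)=C_{\pi(x)^*\psi,\vp}(x^{-1}y),
\]
and left translation preserves $L^2$ up to the factor $\Delta_G(x)^{1/2}$. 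Irreducibility again gives that $\mathcal{D}$ is dense.

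\textbf{Step 2: the closed form and the operator.} For $\vp\in\mathcal{D}$, the map $V_\vp:\psi\mapsto C_{\psi,\vp}$ is closed and everywhere defined, hence bounded, and it intertwines $\pi$ with $\rho$. Therefore $V_\vp^*V_\vp$ commutes with $\pi$, and Schur's lemma gives $V_\vp^*V_\vp=c(\vp)\,I$ with $c(\vp)\ge 0$. Polarizing, $V_{\vp_2}^*V_{\vp_1}=q(\vp_1,\vp_2)\,I$ for a sesquilinear form $q$. This yields
\[
\ip{C_{\psi_1,\vp_1},C_{\psi_2,\vp_2}}=\ip{\psi_1,\psi_2}\,\overline{q(\vp_1,\vp_2)}.
\]
The form $q$ is positive, and $q(\vp,\vp)=0$ forces $C_{\psi,\vp}\equiv 0$, hence $\vp=0$. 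It is also closed: if $\vp_n\to\vp$ and $q(\vp_n-\vp_m)\to 0$, then $C_{\psi,\vp_n}\to C_{\psi,\vp}$ pointwise and the sequence is Cauchy in $L^2$, so $\vp\in\mathcal{D}$ and $q(\vp_n-\vp)\to 0$. Kato's representation theorem then provides a unique positive self-adjoint $K$ with $\dom(K^{1/2})=\mathcal{D}$ and $q(\vp_1,\vp_2)=\ip{K^{1/2}\vp_1,K^{1/2}\vp_2}$. We set $D^{-1}:=K^{1/2}$ and $D:=(K^{1/2})^{-1}$. Since $K^{1/2}$ is injective with dense range, $D$ is densely defined and positive. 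The converse direction of the characterisation is built into the construction: if $C_{\psi,\vp}\in L^2(G)$ for a single $\psi\neq 0$, the argument of Step 1 applied to $\vp$ shows $\vp\in\mathcal{D}$.

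\textbf{Step 3: covariance and uniqueness.} From the left-translation identity in Step 1 we get
\[
q(\pi(x)\vp,\pi(x)\vp)\,\n{\psi}^2=\Delta_G(x)\,\n{\pi(x)^*\psi}^2\,q(\vp,\vp),
\]
which gives
\[
\n{D^{-1}\pi(x)\vp}=\Delta_G(x)^{1/2}\n{D^{-1}\vp}.
\]
Polarization turns this into
\[
\pi(x)^*D^{-2}\pi(x)=\Delta_G(x)D^{-2}
\]
as forms. Uniqueness of the square root of the positive operator $\pi(x)^*D^{-2}\pi(x)$ then yields
\[
\pi(x)^*D^{-1}\pi(x)=\Delta_G(x)^{1/2}D^{-1}.
\]
Inverting gives $\pi(x)D=\Delta_G(x)^{-1/2}D\pi(x)$ on $\dom(D)$, and in particular $\dom(D)$ is $\pi(x)$-invariant. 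For uniqueness, suppose $D'$ has the same properties. The orthogonality relation determines the form $\n{D'^{-1}\vp}^2=q(\vp,\vp)$ on $\mathcal{D}$, so $D'^{-1}=D^{-1}$ by the uniqueness in Kato's theorem.

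The main obstacle is Step 1, specifically showing that one nonzero square-integrable coefficient forces $C_{\psi,\vp_0}\in L^2$ for \emph{all} $\psi$. The delicate point is the passage from a dense $\pi$-invariant domain to the whole space via the closed intertwiner, and I would handle it carefully via the polar decomposition of the closure of $V_{\vp_0}$ together with Schur's lemma for unbounded intertwiners.
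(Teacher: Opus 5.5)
The paper gives no proof of this theorem; it quotes it from Duflo and Moore. Your outline is the classical argument: the closed intertwiner $\psi\mapsto C_{\psi,\vp}$ into $(\rho,L^2(G))$, Schur's lemma, a closed form represented as $D^{-2}$ via Kato's theorem, and covariance from left translation. The architecture is sound, but two places need repair.

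\textbf{Step 1.} The mechanism ``the closure has a $\pi$-invariant dense domain, hence the domain is $\Hi$'' is not valid. Irreducibility only excludes nontrivial \emph{closed} invariant subspaces. Indeed, $\dom(D^{-1})$ itself is a dense, $\pi$-invariant, proper subspace whenever $D^{-1}$ is unbounded, for instance when $G$ is non-unimodular.

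The route you mention at the end is the one that works, and it is short:
\begin{itemize}
\item The map $V:\psi\mapsto C_{\psi,\vp_0}$ on $\{\psi : C_{\psi,\vp_0}\in L^2(G)\}$ is already closed, so no closure is needed.
\item From $V\pi(y)=\rho(y)V$ you get $\rho(y)\dom(V^*)\subseteq\dom(V^*)$ and $V^*\rho(y)=\pi(y)V^*$. Hence every $\pi(y)$ commutes with the self-adjoint operator $V^*V$.
\item The spectral projections of $V^*V$ therefore lie in $\pi(G)'=\C I$, so $V^*V=cI$ with $\dom(V^*V)=\Hi$.
\item Since $\dom(V^*V)\subseteq\dom(V)$, $V$ is everywhere defined.
\end{itemize}
Make this the argument rather than an afterthought.

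\textbf{Step 3.} There are two sign slips that happen to cancel.
\begin{itemize}
\item The correct identity is $C_{\psi,\pi(x)\vp}(y)=C_{\psi,\vp}(x^{-1}y)$. Yours has a spurious $\pi(x)^*$ on $\psi$; this is harmless for the invariance claim.
\item With the paper's conventions (right Haar measure $\mu$, $\Delta_G(x)=\mu(x^{-1}E)/\mu(E)$), one has $\int_G\lvert f(x^{-1}y)\rvert^2\,d\mu(y)=\Delta_G(x)^{-1}\lVert f\rVert_2^2$.
\item Hence $\lVert D^{-1}\pi(x)\vp\rVert=\Delta_G(x)^{-1/2}\lVert D^{-1}\vp\rVert$, so $\pi(x)^*D^{-1}\pi(x)=\Delta_G(x)^{-1/2}D^{-1}$.
\item Inverting gives $\pi(x)^*D\pi(x)=\Delta_G(x)^{1/2}D$, which is exactly $\pi(x)D=\Delta_G(x)^{-1/2}D\pi(x)$.
\end{itemize}
You have the exponent reversed. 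Your inversion step also does not follow from your own formula: inverting $\pi(x)^*D^{-1}\pi(x)=\Delta_G(x)^{1/2}D^{-1}$ would give $\pi(x)D=\Delta_G(x)^{1/2}D\pi(x)$. These two errors cancel, which is why your final formula matches the statement.

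With these fixes, the rest goes through as you describe: closedness of the form $q$, Kato's representation theorem, uniqueness of positive square roots for the covariance, and uniqueness of $D$ from the orthogonality relation plus the domain characterization.
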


The elements of $\dom(\D)$ are called \newterm{admissible vectors}.

%%%%%%%%%%%%%%%%%%%%%%%%%%%%%%%%%%%%%%%%%%%%%%%%%%%%%%%%%%%%%%%%%%%%%%%

\subsection{Nilpotent Lie groups}\label{sub:nilpotent}

For our application, we are only concerned with finite-dimensional, real Lie groups and the corresponding finite-dimensional, real Lie algebras. For background on nilpotent Lie groups, see \cite[Chapters 1.1, 1.2]{NilpotentLieGroups} and for representations on such groups, \cite[Chapter 4.5]{NilpotentLieGroups}, \cite{SquareintModZ}. Those groups are special because the exponential map $\exp_G: \g \to G$ between a nilpotent Lie algebra $\g$ and the corresponding simply connected, connected, nilpotent Lie group $G$ is a global smooth diffeomorphism and $\lambda \circ \exp_G^{-1}$ (where $\lambda$ is the Lebesgue measure on the Euclidean space $\g$) defines a two-sided Haar measure on $G$, making the group unimodular. Further, nilpotent groups are groups of polynomial growth (see \cite[Théorème II.4]{NilpotentPolynomial}), so \cite[Theorem 1.1]{NilpotentLimit} states that for every compact, symmetric neighborhood $V$ of $e$, there exist constants $Q \in \N$ and $c > 0$ such that
\begin{equation}\label{eq:nilpotentlimit}
    \limk \frac{\mu(V^k)}{c k^Q} = 1.
\end{equation}

In light of Section \ref{sec:opconv}, the subtle problem for our situation is that no non-trivial nilpotent Lie group $G$ admits a square-integrable, irreducible representation (see \cite[Chapter 4.5]{NilpotentLieGroups}). This is essentially a consequence of Schur's lemma and Weil's integral formula, together with the fact that $Z$, the center of $G$, is non-compact. Because the center is prohibitive in this case, it is canonical to weaken the condition of square-integrability to only refer to the integral over $G/Z$ instead of over all of $G$. To make this rigorous, consider the quotient map $q: G \to G/Z$ and a smooth right inverse of $q$, which we denote by $s: G/Z \to G$. Such a map exists and can be constructed as follows: Let $\z \subseteq \g$ be the Lie algebra of $Z$ and let
\[
    \phi: \g/\z \to \g, \, X + \z \mapsto X - P_\z X,
\]
where $P_\z$ is the orthogonal projection onto $\z$ with respect to some arbitrarily chosen inner product on the $\R$-space $\g$. Because $G/Z$ is again a simply connected, connected, nilpotent Lie group and has Lie algebra $\g/\z$, we can define
\[
    s := \exp_G \circ \, \phi \circ \exp_{G/Z}^{-1},
\]
which is a smooth map and satisfies $q \circ s = \operatorname{id}_{G/Z}$. The map $s$ is not unique with these properties, but for our applications this will often not matter. We want to use $s$ to turn a function $f$ defined on $G$ into a function on $G/Z$ via $f \mapsto f \circ s$, and if $f$ is constant on each coset $xZ \subseteq G$, then $f \circ s$ does not depend on the particular choice of $s$. Another useful fact is that $q \circ s = \operatorname{id}_{G/Z}$ implies
\begin{equation}\label{eq:salmosthom}
    s(\dot{x}\dot{y}^{-1}) \in s(\dot{x}) s(\dot{y})^{-1}Z
\end{equation}
for all $x, y \in G$, where $\dot{x}, \dot{y}$ denote the corresponding cosets in $G/Z$. Hence, $s$ is a homomorphism up to translation by $Z$.

With this in place, we call a unitary representation $\pi$ of $G$ on $\Hi$ \newterm{square-integrable modulo $Z$} if there exist $\psi, \vp \in \Hi\setminus\set{0}$ such that $C_{\psi, \vp} \circ s \in L^2(G/Z)$. Note that this definition does not depend on the choice of $s$ since $\abs{C_{\psi, \vp}}$ is constant on each coset. Indeed, some simply connected, connected, nilpotent Lie groups admit representations that are square-integrable modulo $Z$ and irreducible; a characterization is given in \cite{SquareintModZ}. Further, the theorem of Duflo-Moore is adaptable to this situation, and the statement even becomes somewhat simpler, as $G$ is unimodular, making the Duflo-Moore operator a constant multiple of the identity.

\begin{theorem}\label{thm:duflomooremodz}
    If $\pi : G \rightarrow \mathcal{U} (\Hi)$ is an irreducible representation of $G$ that is square-integrable modulo $Z$, then $C_{\psi, \vp} \circ s \in L^2(G/Z)$ for all $\psi, \vp \in \Hi$ and there exists a constant $d > 0$ such that
    \[
        \ip{C_{\psi_1, \vp_1} \circ s, C_{\psi_2, \vp_2} \circ s}_{L^2(G/Z)} = \ip{\psi_1, \psi_2} \overline{\ip{d^{-1} \vp_1, d^{-1} \vp_2}}
    \]
    for all $\psi_1, \psi_2, \vp_1, \vp_2 \in \Hi$.
\end{theorem}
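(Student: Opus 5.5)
The plan is to reduce the statement to the classical Schur-orthogonality argument, run on the homogeneous space $G/Z$ with the section $s$ in place of the group itself.

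\textbf{Step 1: the center acts by scalars.} By irreducibility and Schur's lemma, $\pi(z) = \chi(z)\,\mathrm{id}_\Hi$ for every $z \in Z$, with $\abs{\chi(z)} = 1$. Consequently $\abs{C_{\psi,\vp}}$ is constant on cosets of $Z$, and so is the product $C_{\psi_1,\vp_1}\overline{C_{\psi_2,\vp_2}}$, because the phases $\chi(z)$ cancel. Hence every inner product appearing in the statement is independent of the choice of $s$. The same cancellation, combined with \eqref{eq:salmosthom}, yields a translation invariance: replacing $s(\dot x)$ by $s(\dot x \dot y)$ changes $s(\dot x)$ to $s(\dot x)s(\dot y)$ only up to a central factor, and this factor drops out of such products. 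Since $G/Z$ is unimodular, its Haar measure is invariant under both left and right translation.

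\textbf{Step 2: the dense subspace of good vectors.} Fix $\psi_0,\vp_0 \neq 0$ with $C_{\psi_0,\vp_0}\circ s \in L^2(G/Z)$. Let $\mathcal D$ be the set of $\vp$ for which $C_{\psi_0,\vp}\circ s \in L^2(G/Z)$. Using $C_{\psi,\pi(y)\vp}(x) = C_{\psi,\vp}(\pi(y)^{-1}\ldots)$ together with the translation invariance from Step 1, $\mathcal D$ is a $\pi$-invariant subspace, so it is dense by irreducibility. The same holds for the analogous set on the $\psi$ side.

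\textbf{Step 3: closed operators commuting with $\pi$.} For $\vp \in \mathcal D$, define $T_\vp\psi := C_{\psi,\vp}\circ s$, mapping $\mathcal D' \to L^2(G/Z)$. This map is closed. It intertwines $\pi$ with the right-regular-type action $f \mapsto \overline{\chi}\cdot f(\cdot\,\dot y)$, where the twisting character compensates for the central factor. The composite $T_\vp^*T_{\vp'}$ is then a closed operator commuting with $\pi$. By a Schur lemma for closed operators (the standard Duflo--Moore/Godement argument), it is a scalar $c(\vp',\vp)$. In particular the domain is all of $\Hi$, so $C_{\psi,\vp}\circ s \in L^2$ for every $\psi \in \Hi$ whenever $\vp \in \mathcal D$.

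\textbf{Step 4: symmetry in the roles of the vectors.} Apply the same argument with the roles of the two vectors swapped. The needed identity $C_{\psi,\vp}(x) = \overline{C_{\vp,\psi}(x^{-1})}$, combined with the inversion invariance of Haar measure on the unimodular group $G/Z$ and once more the central-phase cancellation, gives
\[
\ip{C_{\psi_1,\vp_1}\circ s, C_{\psi_2,\vp_2}\circ s} = \ip{\psi_1,\psi_2}\,c(\vp_1,\vp_2) = \overline{\ip{\vp_1,\vp_2}}\,c'(\psi_1,\psi_2).
\]
Choosing unit vectors shows that $c(\vp_1,\vp_2) = d^{-2}\,\overline{\ip{\vp_1,\vp_2}}$ for a single constant $d^{-2} > 0$. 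The constant is positive because the diagonal terms are norms of nonzero functions, which follows from irreducibility. This also forces $\mathcal D = \Hi$, since the norm $\n{C_{\psi,\vp}\circ s}^2 = d^{-2}\n{\psi}^2\n{\vp}^2$ is finite for all vectors by a closed-graph/Fatou limiting argument.

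\textbf{Main obstacle.} The delicate point is Step 3: the unbounded Schur lemma and the closedness of $T_\vp$. Here I would follow the Duflo--Moore proof verbatim on $G/Z$, carrying the character $\chi$ along. The fact that $\Delta_{G/Z} \equiv 1$ is exactly what makes the resulting operator a scalar rather than a nontrivial $D$.
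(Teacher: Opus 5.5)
Your proposal is correct and follows the route the paper has in mind. The paper gives no proof of this theorem: it cites the literature and remarks only that the Duflo--Moore theorem adapts to $G/Z$, with unimodularity making the Duflo--Moore operator a scalar. Your Godement-type argument is exactly that adaptation: a closed intertwiner, the unbounded Schur lemma, then the symmetry $C_{\psi,\vp}(x)=\overline{C_{\vp,\psi}(x^{-1})}$ on the unimodular group $G/Z$.

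When writing it out, fix three points:
\begin{enumerate}
\item Give $T_\vp$ its maximal domain $\{\psi : C_{\psi,\vp}\circ s\in L^2(G/Z)\}$. As written, $\mathcal D'$ is defined relative to $\vp_0$, so $T_\vp$ need not map it into $L^2(G/Z)$.
\item Apply the unbounded Schur lemma first to the self-adjoint operator $T_\vp^*T_\vp$, which shows each $T_\vp$ is bounded and everywhere defined. Only then treat $T_\vp^*T_{\vp'}$, which is not a priori closed.
\item The intertwined action carries the phase $\chi\bigl(s(\dot x\dot y)^{-1}s(\dot x)y\bigr)$, which depends on $\dot x$, rather than a fixed character.
\end{enumerate}
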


%%%%%%%%%%%%%%%%%%%%%%%%%%%%%%%%%%%%%%%%%%%%%%%%%%%%%%%%%%%%%%%%%%%%%%%

\subsection{Homogeneous groups}\label{sub:homogeneous}

For this subsection, we refer to \cite[Section A]{HomogeneousGroups}. A homogeneous group $G$ is a special type of simply connected, connected, nilpotent Lie group that is interesting to us because of the existence of a family of \newterm{dilations} on $G$ with a set of useful properties. These dilations generalize scalar multiplications on the additive groups $\R^k$ and power functions on the multiplicative group $\R_{>0}$.

\begin{theorem}\label{thm:dilations}
    If $G$ is a homogeneous group, then for every $r > 0$, there exists a map $\Gamma_r: G \to G$ with the following properties.
    \begin{enumerate}[(i)]
        \item For every $r>0$, $\Gamma_r$ is a Lie group automorphism of $G$.
        \item The map $\R_{>0} \to \operatorname{Aut}(G), r \mapsto \Gamma_r$ is a group homomorphism.
        \item For each $x \in G$, the map $\R_{>0} \to G, r \mapsto \Gamma_r(x)$ is continuous with $\lim_{r \downarrow 0} \Gamma_r(x) = e$.
        \item Let $\mu$ denote a Haar measure on $G$. Then there exists $Q \in \N$, called the \newterm{homogeneous dimension} of $G$, such that
        \[
            \mu(\Gamma_r(E)) = r^Q \mu(E)
        \]
        for all $E \in \Bo(G)$ and $r>0$.
    \end{enumerate}
\end{theorem}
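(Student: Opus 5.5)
The plan is to construct the dilations explicitly from the defining grading of a homogeneous group and then verify (i)--(iv) one at a time. By definition (as in \cite[Section A]{HomogeneousGroups}), a homogeneous group $G$ is a simply connected, connected, nilpotent Lie group whose Lie algebra $\g$ carries a diagonalizable linear map $A$ with positive eigenvalues. Equivalently, there is a basis $X_1, \dots, X_n$ of $\g$ and weights $0 < \nu_1 \le \dots \le \nu_n$ such that the linear maps
\[
    \gamma_r : \g \to \g, \quad X_j \mapsto r^{\nu_j} X_j
\]
are Lie algebra automorphisms for every $r > 0$. I would set $\Gamma_r := \exp_G \circ \gamma_r \circ \exp_G^{-1}$, which is well defined because $\exp_G$ is a global diffeomorphism (Section \ref{sub:nilpotent}). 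If the definition only guarantees real eigenvalues with rational ratios, one rescales $A$ so that all $\nu_j$ lie in $\N$; this rescaling is only needed for $Q \in \N$ in (iv).

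For (i), $\gamma_r$ is an automorphism of $\g$. By the Baker--Campbell--Hausdorff formula, which is a finite sum in the nilpotent case, the group law in exponential coordinates is a polynomial built from brackets. Hence $\Gamma_r(xy) = \Gamma_r(x)\Gamma_r(y)$, and $\Gamma_r$ is smooth with smooth inverse $\Gamma_{1/r}$. For (ii), $\gamma_r \gamma_s = \gamma_{rs}$ holds on each basis vector, and conjugating by $\exp_G$ preserves this identity. For (iii), $r \mapsto \gamma_r X = \sum_j r^{\nu_j} c_j X_j$ is continuous and tends to $0$ as $r \downarrow 0$ because every $\nu_j > 0$. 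Continuity of $\exp_G$ then gives $\Gamma_r(x) \to \exp_G(0) = e$.

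For (iv), I use that $\lambda \circ \exp_G^{-1}$ is a Haar measure (Section \ref{sub:nilpotent}). By uniqueness of Haar measure up to scaling, it suffices to prove the claim for this particular measure. Then $\mu(\Gamma_r(E)) = \lambda(\gamma_r(\exp_G^{-1}E)) = \abs{\det \gamma_r}\, \lambda(\exp_G^{-1}E)$. Since $\det \gamma_r = r^{\nu_1 + \dots + \nu_n}$, we get $Q = \sum_j \nu_j$, which lies in $\N$ after the normalization of the weights described above.

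The main obstacle is not any of these computations but pinning down the definition of a homogeneous group precisely enough that the weights can be chosen to be positive integers. Everything else is linear algebra transported through $\exp_G$. I would resolve this by citing the normalization convention in \cite{HomogeneousGroups}, where the dilation weights are taken with minimum $1$ and are then shown to be rational. If only real weights are available, the argument still yields (iv) with $Q = \sum_j \nu_j$ as a positive real number.
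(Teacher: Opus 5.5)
Your construction $\Gamma_r = \exp_G \circ \gamma_r \circ \exp_G^{-1}$ is the standard argument behind the reference the paper cites; the paper gives no proof of its own. Your verification of (i)--(iii) is fine, and so is the scaling law $\mu(\Gamma_r(E)) = r^{\nu_1 + \dots + \nu_n}\mu(E)$ obtained via the Haar measure $\lambda \circ \exp_G^{-1}$. The gap is the integrality $Q \in \N$ in (iv). Your rescaling step presupposes that the weights have rational ratios. Neither the definition of a homogeneous group nor the cited reference provides this for a given family of dilations. For example, $\R^2$ with $\gamma_r(x,y) = (rx, r^{\sqrt{2}}y)$ is a legitimate homogeneous structure with weights $1$ and $\sqrt{2}$, and no rescaling of $A$ turns these into integers. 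Falling back on a real $Q = \sum_j \nu_j$ does not prove the statement as written.

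The missing observation is that the statement is existential, so you may discard the given dilations and build new ones with integer weights. Write $[X_i, X_j] = \sum_k c_{ij}^k X_k$ in your eigenbasis. A diagonal map $X_j \mapsto r^{\nu'_j} X_j$ is a Lie algebra automorphism for every $r > 0$ if and only if $\nu'_k = \nu'_i + \nu'_j$ whenever $c_{ij}^k \neq 0$. These are homogeneous linear equations with integer coefficients. Hence their solution space $L \subseteq \R^n$ is spanned by rational vectors, and $L \cap \mathbb{Q}^n$ is dense in $L$. The original weight vector lies in $L \cap (0,\infty)^n$, so this set is a nonempty, relatively open subset of $L$ and contains a rational point. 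Clearing denominators gives a vector in $L$ with positive integer entries $\nu'_j$. Running your argument with these weights yields (i)--(iv) with $Q = \sum_j \nu'_j \in \N$. For the paper's later use in Lemma \ref{la:homogeneousfolner} and Theorem \ref{thm:homogeneousmain}, a real $Q > 0$ would actually suffice.
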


%%%%%%%%%%%%%%%%%%%%%%%%%%%%%%%%%%%%%%%%%%%%%%%%%%%%%%%%%%%%%%%%%%%%%%%
%%%%%%%%%%%%%%%%%%%%%%%%%%%%%%%%%%%%%%%%%%%%%%%%%%%%%%%%%%%%%%%%%%%%%%%

\section{Følner sequences}\label{sec:folner}

For a locally compact group $G$, a well-studied property is \newterm{amenability}. There is an abundance of equivalent characterizations (see \cite{Amenability}), however in our context, \newterm{Følner sequences} are most interesting. 

\begin{definition}\label{def:folner}
    \begin{enumerate}[(i)]
        \item For a set $E \in \Bo(G)$ with $0 < \mu(E) < \infty$, define
        \[
            \beta_E: G \to [0, 1], \; x \mapsto 1 - \frac{\mu(E \cap Ex^{-1})}{\mu(E)} = \frac{\mu(E \setminus Ex^{-1})}{\mu(E)}.
        \]
        \item Let $\seqk{E_k} \subseteq \Bo(G)$ be a sequence such that $0 < \mu(E_k) < \infty$ for all $k \in \N$. $\seqk{E_k}$ is called a \newterm{Følner sequence} if $\seqk{\bek}$ converges to $0$ uniformly on every compact subset of $G$.
        \item If $\seqk{E_k} \subseteq \Bo(G)$ with $0 < \mu(E_k) < \infty$ for all $k \in \N$ such that $\seqk{\bek}$ converges to $0$ only pointwise on some set $X \subseteq G$, then we call $\seqk{E_k}$ \newterm{pointwise Følner} on $X$.
    \end{enumerate}
\end{definition}

This notion was first proposed by Følner for discrete groups in \cite{Folner} and it turns out that $G$ is amenable if and only if there exists a Følner sequence in $G$. The first goal of this section is to show that the requirement of locally uniform convergence in Definition \ref{def:folner} (ii) can be substantially weakened.

\begin{proposition}\label{prop:folnerequiv}
    Let $G$ be connected and $\seqk{E_k} \subseteq \Bo(G)$ such that $0 < \mu(E_k) < \infty$ for all $k \in \N$. Then the following three conditions are equivalent.
    \begin{enumerate}[(a)]
        \item $\seqk{E_k}$ is pointwise Følner almost everywhere on a neighborhood of $e$.
        \item $\seqk{E_k}$ is pointwise Følner on $G$.
        \item $\seqk{E_k}$ is a Følner sequence.
    \end{enumerate}
\end{proposition}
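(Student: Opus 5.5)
The implications (c)$\Rightarrow$(b)$\Rightarrow$(a) are immediate from Definition \ref{def:folner}. Uniform convergence on compact sets gives pointwise convergence at every point, since singletons are compact. Pointwise convergence everywhere gives convergence almost everywhere on any neighborhood of $e$. The work is in (a)$\Rightarrow$(c). The plan is to obtain uniform convergence on a small set of positive measure via Egorov's theorem, spread it to a neighborhood of $e$ via a Steinhaus-type argument, and then to every compact set using connectedness. Throughout I write $\beta_k := \bek$.

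\emph{Step 1: two algebraic properties of $\beta_E$.} First I show $\beta_E(x^{-1}) = \beta_E(x)$. Right invariance gives $\mu(E \setminus Ex) = \mu(Ex^{-1} \setminus E)$, and this equals $\mu(Ex^{-1}) - \mu(E \cap Ex^{-1}) = \mu(E) - \mu(E\cap Ex^{-1}) = \mu(E \setminus Ex^{-1})$. Second I show the subadditivity $\beta_E(xy) \le \beta_E(x) + \beta_E(y)$. Since $(xy)^{-1} = y^{-1}x^{-1}$, we have
\[
    E \setminus Ey^{-1}x^{-1} \subseteq (E \setminus Ex^{-1}) \cup (Ex^{-1} \setminus Ey^{-1}x^{-1}),
\]
and by right invariance $\mu(Ex^{-1} \setminus Ey^{-1}x^{-1}) = \mu(E \setminus Ey^{-1})$.

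\emph{Step 2: uniform convergence on a set of positive measure.} Let $U$ be the neighborhood from (a). By inner regularity and local compactness I pick a compact $C \subseteq U$ with $0 < \mu(C) < \infty$. On $C$ the functions $\beta_k$ converge to $0$ almost everywhere, and they are Borel measurable. For measurability, note $\beta_k(x) = 1 - \mu(E_k)^{-1}\int_G \chi_{E_k}(y)\chi_{E_k}(yx)\,d\mu(y)$, which is measurable by Fubini; $\sigma$-compactness of $G$ gives the required $\sigma$-finiteness. Egorov's theorem then yields a Borel set $A \subseteq C$ with $\mu(A) > 0$ such that $\sup_{a \in A} \beta_k(a) \to 0$.

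\emph{Step 3: Steinhaus.} The map $x \mapsto \mu(A \cap Ax)$ is continuous by continuity of right translation in $L^1(G)$, applied to $\chi_A$. It equals $\mu(A) > 0$ at $x = e$, so it is positive on an open neighborhood $W$ of $e$. For $x \in W$ the set $A \cap Ax$ is nonempty, say $a = bx$ with $a, b \in A$, so $x = b^{-1}a$. By Step 1,
\[
    \beta_k(x) \le \beta_k(b^{-1}) + \beta_k(a) = \beta_k(b) + \beta_k(a) \le 2\sup_A \beta_k.
\]
Hence $\beta_k \to 0$ uniformly on $W$, and by symmetry of $\beta_k$ also on $W \cup W^{-1}$. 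Replacing $W$ by $V := W \cap W^{-1}$ I get a symmetric open neighborhood of $e$ on which $\beta_k \to 0$ uniformly.

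\emph{Step 4: connectedness.} Since $G$ is connected, the open subgroup $\bigcup_{m} V^m$ equals $G$. The sets $V^m$ are open and increasing, so any compact $K \subseteq G$ lies in some $V^m$. Iterating subadditivity gives $\sup_K \beta_k \le m \sup_V \beta_k \to 0$, which is (c).

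I expect the main obstacle to be Step 2–3. Pointwise convergence alone gives no equicontinuity of $(\beta_k)$ in $k$, and the lemma's hypothesis only controls an almost-everywhere set. The Egorov-plus-Steinhaus combination is exactly what turns almost-everywhere convergence into uniform convergence on a genuine neighborhood. The measurability and finiteness issues there are the points to check carefully.
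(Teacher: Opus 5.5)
Your proof is correct, but it is organized differently from the paper's. The paper proves (a)$\Rightarrow$(b)$\Rightarrow$(c). It first propagates \emph{pointwise} convergence: it passes to the symmetric set $X\cap X^{-1}$ (using that inversion preserves null sets), applies Steinhaus to $(X\cap X^{-1})^2$ together with Lemma~\ref{la:folneronprod}, and then uses connectedness. Only afterwards does it upgrade to locally uniform convergence, by averaging the subadditivity inequality over a compact $K$; this brings in the modular function and dominated convergence, see \eqref{eq:uniformbetaineq}.

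You go directly from (a) to (c). Egorov gives uniformity on a set $A$ of positive measure. The symmetry $\beta_E(x^{-1})=\beta_E(x)$, which the paper never uses and which replaces its $X\cap X^{-1}$ trick, together with subadditivity transports this uniformity to the Steinhaus neighborhood $W\subseteq A^{-1}A$. Connectedness plus compactness then yields the explicit bound $\sup_K\beta_{E_k}\le 2m\sup_A\beta_{E_k}$. Your route needs neither the modular function nor any integration of $\beta_{E_k}$. It still gives (b)$\Rightarrow$(c) for disconnected $G$ (Remark~\ref{rm:folnerdisconnected}) if you cover $K$ by finitely many translates $x_iV$.

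The paper's split pays off elsewhere: the averaging inequality \eqref{eq:uniformbetaineq} is reused in the proof of Theorem~\ref{thm:main}. There the available information is $\int_W\beta_{E_k^{-1}}\,d\mu\to 0$, not almost-everywhere convergence of the whole sequence, so Egorov would not apply directly. You would need a subsequence argument or a convergence-in-measure version of your Step~3.

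A minor point: your Fubini argument for measurability is more than necessary. $\beta_{E_k}$ is continuous by the same $L^1$-continuity of right translations that you use in Step~3.
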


For the proof, we need the following two auxiliary statements.

\begin{lemma}\label{la:betaineq}
    Let $E \in \Bo(G)$ with $0 < \mu(E) < \infty$. Then
    \[
        \beta_E(xy) \leq \beta_E(x) + \beta_E(y)
    \]
    for all $x, y \in G$.
\end{lemma}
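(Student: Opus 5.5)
The plan is to work with the second expression in the definition, $\beta_E(x) = \mu(E \setminus Ex^{-1})/\mu(E)$, and obtain subadditivity from a set-theoretic inclusion. After multiplying by $\mu(E)$, it suffices to show
\[
    \mu(E \setminus E(xy)^{-1}) \le \mu(E \setminus Ex^{-1}) + \mu(E \setminus Ey^{-1}).
\]
Since $(xy)^{-1} = y^{-1}x^{-1}$, the middle set is $E \setminus Ey^{-1}x^{-1}$.

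The key step is the elementary inclusion
\[
    E \setminus Ey^{-1}x^{-1} \subseteq (E \setminus Ex^{-1}) \cup \big((Ex^{-1}) \setminus (Ey^{-1}x^{-1})\big).
\]
This holds because any point of $E$ not lying in $Ex^{-1}$ belongs to the first set. Any point of $E$ lying in $Ex^{-1}$ but not in $Ey^{-1}x^{-1}$ belongs to the second set.

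The second set equals $(E \setminus Ey^{-1})x^{-1}$, because right translation by $x^{-1}$ is a bijection and therefore commutes with set differences. Since $\mu$ is a right Haar measure, $\mu((E\setminus Ey^{-1})x^{-1}) = \mu(E \setminus Ey^{-1})$. All sets involved are Borel, since translations are homeomorphisms. Subadditivity of $\mu$ then gives the displayed inequality, and dividing by $\mu(E)$ completes the proof.

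I do not expect a real obstacle. The only care needed is to translate on the correct side: right invariance of $\mu$ is exactly what makes the right translate by $x^{-1}$ measure-preserving. This is why $\beta_E$ was defined with $Ex^{-1}$ rather than $x^{-1}E$; the latter would introduce a factor $\Delta_G$.
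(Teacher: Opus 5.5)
Your proof is correct and is essentially the paper's argument. The paper uses exactly the inclusion $E \setminus E(xy)^{-1} \subseteq (E \setminus Ex^{-1}) \cup (E \setminus Ey^{-1})x^{-1}$, then takes measures, applies right invariance, and divides by $\mu(E)$; you additionally spell out why the inclusion holds and why the translated set has the same measure, which the paper leaves implicit.
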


\begin{proof}
    Note that we have the set inclusion
    \[
        E \setminus E(xy)^{-1} \subseteq (E \setminus Ex^{-1}) \cup (E \setminus Ey^{-1})x^{-1}
    \]
    for all $x, y \in G$. Taking measures, using right invariance, and dividing by $\mu(E_k)$ yields the claim.
\end{proof}

The next lemma follows immediately from Lemma \ref{la:betaineq}.

\begin{lemma}\label{la:folneronprod}
    Let $\seqk{E_k} \subseteq \Bo(G)$ such that $0 < \mu(E_k) < \infty$ for all $k \in \N$. Let $X, Y \subseteq G$ be arbitrary. If $\seqk{E_k}$ is pointwise Følner on $X \cup Y$, then also on $XY$.
\end{lemma}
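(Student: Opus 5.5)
The plan is to apply Lemma \ref{la:betaineq} pointwise to each product $xy \in XY$ and then pass to the limit in $k$. Fix $z \in XY$ and write $z = xy$ with $x \in X$ and $y \in Y$. Since $x, y \in X \cup Y$, the hypothesis gives $\bek(x) \to 0$ and $\bek(y) \to 0$ as $k \to \infty$.

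Lemma \ref{la:betaineq}, applied to the set $E_k$ for each $k$, yields
\[
    0 \le \bek(xy) \le \bek(x) + \bek(y).
\]
The lower bound holds because $\beta_{E_k}$ takes values in $[0,1]$ by Definition \ref{def:folner}. The right-hand side tends to $0$, so the squeeze theorem gives $\limk \bek(z) = 0$. As $z \in XY$ was arbitrary, $\seqk{E_k}$ is pointwise Følner on $XY$.

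There is no real obstacle here. The only point to check is that the factorization $z = xy$ uses one element from $X$ and one from $Y$, and both lie in $X \cup Y$, where pointwise convergence is assumed. The argument does not depend on which factorization of $z$ is chosen, and it needs no uniformity, so it is purely pointwise.
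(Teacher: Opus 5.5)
Your proposal is correct and is exactly the argument the paper intends: it says the lemma "follows immediately from Lemma \ref{la:betaineq}", namely write $z = xy$, bound $0 \le \bek(xy) \le \bek(x) + \bek(y)$, and let $k \to \infty$ using pointwise convergence on $X \cup Y$.
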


\begin{proof}[Proof of Proposition \ref{prop:folnerequiv}]
    The implication (c)$\implies$(a) is obvious, so assume (a). Then there exists a compact, symmetric neighborhood $V$ of $e$ and a set $X \subseteq V$ with $\mu(X) = \mu(V)$ such that $\seqk{E_k}$ is pointwise Følner on $X$. Because $V$ is symmetric, we have $X^{-1} \subseteq V$ and since $X$ has full measure in $V$, so does $X^{-1}$. Hence, $X \cap X^{-1}$ is symmetric, has positive measure, and $\seqk{E_k}$ is pointwise Følner on $X \cap X^{-1}$. By Steinhaus's theorem, $U := (X \cap X^{-1})^2$ is then a neighborhood of $e$ and by Lemma \ref{la:folneronprod}, $\seqk{E_k}$ is pointwise Følner on $U$. As $G$ is connected, $G = \bigcup_{m \in \N} U^m$, so an induction applied to Lemma \ref{la:folneronprod} shows that $\seqk{E_k}$ is pointwise Følner on $G$.

    Now assume (b) and let $K \subseteq G$ be compact with positive measure. Using a substitution in Lemma \ref{la:betaineq}, we obtain
    \[
        \bek(x) \le \bek(y) + \bek(xy^{-1})
    \]
    for all $x, y \in K$ and $k \in \N$. Averaging over $K$ yields
    \begin{align}\label{eq:uniformbetaineq}
    \begin{aligned}
        \bek(x) 
            &\leq \mu(K)^{-1} \int_K \bek(y) + \bek(xy^{-1}) d\mu(y) \\
            &= \mu(K)^{-1} \left(\int_K \bek d\mu + \int_{xK^{-1}} \Delta_G(y) \bek(y) d\mu(y)\right) \\
            &\le \mu(K)^{-1} \left(\int_K \bek d\mu + \left(\max_{y \in KK^{-1}} \Delta_G(y)\right) \int_{KK^{-1}} \bek d\mu\right),
    \end{aligned}
    \end{align}
    where we used that the continuous function $\Delta_G$ attains its maximum on the compact set $KK^{-1}$. We know that each $\bek$ is bounded by $1$ and converges pointwise to $0$ because $\seqk{E_k}$ is pointwise Følner. Thus, the dominated convergence theorem states that the right hand side of \eqref{eq:uniformbetaineq} tends to $0$ as $k \to \infty$. Since the expression does not depend on $x \in K$, we have uniform convergence of $\seqk{\bek}$ on $K$.
\end{proof}

\begin{remark}\label{rm:folnerdisconnected}
    If $G$ is not connected in the setting of Proposition \ref{prop:folnerequiv}, then the implication (b)$\implies$(c) still holds, however, in general, (a) implies only that the sequence is pointwise Følner on the identity component $G_0$ of $G$ (or, more generally, the open subgroup of $G$ generated by the given neighborhood of $e$).
\end{remark}

In light of our goal to apply the main theorem to nilpotent and homogeneous Lie groups, we now outline how to construct Følner sequences in these settings. In the general nilpotent case, we consider growing balls with respect to some fixed word metric on $G$ and rely on the fact that $G$ is a group of polynomial growth.

\begin{lemma}\label{la:nilpotentfolner}
    Let $G$ be a nilpotent Lie group and let $V$ be a compact, symmetric neighborhood of $e$ in $G$. Then $\seqk{V^k}$ is a Følner sequence.
\end{lemma}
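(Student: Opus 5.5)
By Proposition \ref{prop:folnerequiv} (applicable since a nilpotent Lie group in the sense of Section \ref{sub:nilpotent} is connected), it suffices to show that $\seqk{V^k}$ is pointwise Følner on a neighborhood of $e$. For this I take the neighborhood $V$ itself. I would then estimate $\beta_{V^k}(x)$ for $x \in V$ by a direct set inclusion and finish with the polynomial growth asymptotics \eqref{eq:nilpotentlimit}.

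\emph{Set inclusion.} Fix $x \in V$. Since $V$ is symmetric, $x^{-1} \in V$, and so $V^{k-1} x \subseteq V^k$. Right multiplication by $x^{-1}$ then gives $V^{k-1} \subseteq V^k x^{-1}$. Intersecting with $V^k$, and using $V^{k-1} \subseteq V^k$ (which holds because $e \in V$), we obtain
\[
    V^{k-1} \subseteq V^k \cap V^k x^{-1}.
\]
Because $\mu$ is right invariant and each $V^k$ is compact, hence of finite measure, this yields
\[
    0 \le \beta_{V^k}(x) = 1 - \frac{\mu(V^k \cap V^k x^{-1})}{\mu(V^k)} \le 1 - \frac{\mu(V^{k-1})}{\mu(V^k)}
\]
for all $x \in V$ and $k \ge 2$. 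The measures $\mu(V^k)$ are positive, since $V$ has nonempty interior.

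\emph{Growth.} By \eqref{eq:nilpotentlimit}, $\mu(V^k) \sim c k^Q$, and therefore
\[
    \frac{\mu(V^{k-1})}{\mu(V^k)} = \frac{\mu(V^{k-1})}{c(k-1)^Q}\cdot \frac{c k^Q}{\mu(V^k)}\cdot \Big(\frac{k-1}{k}\Big)^Q \longrightarrow 1.
\]
Hence $\beta_{V^k} \to 0$ uniformly on $V$, which in particular gives condition (a) of Proposition \ref{prop:folnerequiv}. By (a)$\implies$(c), the sequence $\seqk{V^k}$ is a Følner sequence.

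The only substantive input is the sharp asymptotic \eqref{eq:nilpotentlimit}. A mere polynomial bound $\mu(V^k) \le C k^Q$ would only give Følner behaviour along a subsequence, since it does not control the ratio of consecutive terms. With the exact limit available from \cite{NilpotentLimit}, the remaining steps are routine. We also need that $G$ is connected, which is part of the standing assumptions of Section \ref{sub:nilpotent}. If $G$ were not connected, we could instead invoke Remark \ref{rm:folnerdisconnected} for the subgroup generated by $V$, which is all of $\bigcup_k V^k$.
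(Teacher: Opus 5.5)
Your proof is correct and is essentially the paper's argument. Both use the inclusion $V^{k-1} \subseteq V^k \cap V^k x^{-1}$ for $x \in V$, get $\mu(V^{k-1})/\mu(V^k) \to 1$ from \eqref{eq:nilpotentlimit}, and conclude with Proposition \ref{prop:folnerequiv}. One small quibble: $V^{k-1}x \subseteq V^k$ follows directly from $x \in V$, so symmetry is not needed at that step; it matters only because \eqref{eq:nilpotentlimit} is stated for compact symmetric neighborhoods.
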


\begin{proof}
    Let $x \in V$ and note that, for any $k \in \N$, we have $V^{k-1} \subseteq V^k$ and $V^{k-1}x \subseteq V^k$, so that $V^{k-1} \subseteq V^k \cap V^kx^{-1}$. Thus,
    \[
        1 \ge \frac{\mu(V^k \cap V^kx^{-1})}{\mu(V^k)} \ge \frac{\mu(V^{k-1})}{\mu(V^k)} = \frac{\mu(V^{k-1})}{c (k-1)^Q} \, \frac{c k^Q}{\mu(V^k)} \left(\frac{k-1}{k}\right)^Q,
    \]
    and by \eqref{eq:nilpotentlimit}, each term on the right hand side converges to $1$ as $k \to \infty$ if $Q \in \N$ and $c > 0$ are chosen correctly. Since $V$ is a neighborhood of $e$, Proposition \ref{prop:folnerequiv} yields that $\seqk{V^k}$ is a Følner sequence.
\end{proof}

If $G$ is additionally homogeneous, the dilations allow us to scale a fixed set continuously, so easily constructed Følner sequences are even more abundant here.

\begin{lemma}\label{la:homogeneousfolner}
    Let $G$ be a homogeneous group with dilations $(\Gamma_r)_{r>0}$. Let $E \subseteq G$ have finite, non-zero measure, and $\seqk{r_k}$ be a sequence of positive real numbers with $\limk r_k = \infty$. Then $\seqk{\Gamma_{r_k}(E)}$ is a Følner sequence in $G$.
\end{lemma}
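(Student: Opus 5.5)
The idea is to compute $\beta_{\Gamma_r(E)}$ explicitly using the automorphism property of the dilations, reduce it to $\beta_E$ evaluated at a point near $e$, and then show that $\beta_E$ is continuous at $e$. Proposition \ref{prop:folnerequiv} then finishes the argument, since a homogeneous group is connected and we can check the Følner property pointwise.

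\textbf{Scaling identity.} Write $E_k := \Gamma_{r_k}(E)$. By Theorem \ref{thm:dilations} (i), $\Gamma_r$ is an automorphism, so for every $x \in G$ we have $\Gamma_r(E) x^{-1} = \Gamma_r(E\,\Gamma_r^{-1}(x)^{-1})$. Theorem \ref{thm:dilations} (ii) gives $\Gamma_r^{-1} = \Gamma_{1/r}$. Combining these,
\[
\Gamma_r(E) \setminus \Gamma_r(E)x^{-1} = \Gamma_r\bigl(E \setminus E\,\Gamma_{1/r}(x)^{-1}\bigr).
\]
Taking measures with Theorem \ref{thm:dilations} (iv), the factor $r^Q$ cancels in the quotient, so
\[
\beta_{\Gamma_r(E)}(x) = \beta_E(\Gamma_{1/r}(x)).
\]
Fix $x \in G$. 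Since $1/r_k \to 0$, Theorem \ref{thm:dilations} (iii) yields $\Gamma_{1/r_k}(x) \to e$. It therefore suffices to show $\beta_E(y) \to 0$ as $y \to e$.

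\textbf{Continuity of $\beta_E$ at $e$.} This is the main step, because $E$ is only assumed measurable with finite positive measure. Since $\mu$ is a right Haar measure, $\mu(E \setminus Ey^{-1}) \le \n{\chi_E - \chi_{Ey^{-1}}}_{L^1(G)}$. Moreover, $\chi_{Ey^{-1}}(z) = \chi_E(zy)$ is the right translate of $\chi_E \in L^1(G)$. Right translation is continuous in $L^1(G)$ with respect to the right Haar measure; this is standard, proved by approximating with $C_c(G)$ functions and using uniform continuity. Hence $\beta_E(y) \to 0$ as $y \to e$. If one prefers to rely on left translations, note that $G$ is nilpotent and hence unimodular, so left and right translations both act isometrically on $L^1(G)$ and either form of the continuity result applies.

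\textbf{Conclusion.} Combining the two steps, $\beta_{E_k}(x) \to 0$ for every $x \in G$, so $\seqk{E_k}$ is pointwise Følner on $G$. A homogeneous group is connected and $0<\mu(E_k) = r_k^Q\mu(E)<\infty$, so Proposition \ref{prop:folnerequiv} ((b)$\implies$(c)) shows that $\seqk{E_k}$ is a Følner sequence.
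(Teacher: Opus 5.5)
Your proposal is correct and follows essentially the same route as the paper: you derive the scaling identity $\beta_{\Gamma_r(E)}(x) = \beta_E(\Gamma_{1/r}(x))$ from parts (i), (ii), (iv) of Theorem \ref{thm:dilations}, then combine part (iii) with continuity of right translations in $L^1(G)$. The only differences are cosmetic. You work with $Ex^{-1}$ and set differences, where the paper uses $Ex$ and intersections. You also state the final appeal to Proposition \ref{prop:folnerequiv} ((b)$\implies$(c)) explicitly, which the paper leaves implicit once the pointwise limit is established.
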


\begin{proof}
    The necessary background for this proof is contained in Theorem \ref{thm:dilations}. Using parts (i), (ii), and (iv), we can rewrite
    \[
        \mu(\Gamma_r(E) \cap \Gamma_r(E) x) = \mu (\Gamma_r(E \cap E \Gamma_{r^{-1}}(x))) = r^Q \mu(E \cap E \Gamma_{r^{-1}}(x))
    \]
    for all $r > 0$ and $x \in G$, and
    \[
        \mu(\Gamma_r(E)) = r^Q \mu(E),
    \]
    if $Q$ is the homogeneous dimension of $G$. Part (iii) asserts that $\limk \Gamma_{r_k^{-1}}(x) = e$ for every $x \in G$, so
    \[
        \abs{\mu(E) - \mu(E \cap E \Gamma_{r_k^{-1}}(x))} \le \int_E \abs{\chi_E(y) - \chi_E(y \Gamma_{r_k^{-1}}(x)^{-1})} d\mu(y)
    \]
    converges to $0$ by continuity of right translations in $L^1 (G)$. Thus, combining everything,
    \[
        \limk \frac{\mu(\Gamma_{r_k}(E) \cap \Gamma_{r_k}(E) x)}{\mu(\Gamma_{r_k}(E))} = \limk \frac{\mu(E \cap E \Gamma_{r_k^{-1}}(x))}{\mu(E)} = 1.
    \]
\end{proof}

%%%%%%%%%%%%%%%%%%%%%%%%%%%%%%%%%%%%%%%%%%%%%%%%%%%%%%%%%%%%%%%%%%%%%%%
%%%%%%%%%%%%%%%%%%%%%%%%%%%%%%%%%%%%%%%%%%%%%%%%%%%%%%%%%%%%%%%%%%%%%%%

\section{Operator convolutions}\label{sec:opconv}

Throughout this section and the next one, let $G$ be a locally compact, $\sigma$-compact group and $\pi$ a square-integrable, irreducible representation of $G$ on a separable Hilbert space $\Hi$. The main object of study of this paper is the convolution between a complex-valued function on $G$ and a bounded operator on $G$, which we will consider in the latter part of this section. Implicitly essential is also a type of convolution between a trace class operator and a bounded operator on $\Hi$. Many of the properties we need here can already be found in \cite[Section 3]{Halvdansson}, so we will state them without proof. Whenever something new is needed, a proof will be included. First, we recall the usual convolution between two functions using the right Haar measure.
\begin{definition}\label{def:ffconv}
    Let $f, g \in L^1(G)$. Then, the convolution $f * g: G \to \C$ is defined by
    \[
        (f * g)(x) := \int_G f(y) g(xy^{-1}) d\mu(y)
    \]
    for all $x \in G$.
\end{definition}

Inspired by the necessary translation of the second function in this definition, we define a translation of an operator by an element of $G$ using our unitary representation. Let
\[
    \alpha: G \times \B \to \B, \, (x, T) \mapsto \alpha_x(T) := \pi(x)^* T \pi(x).
\]
Through this and only this will the convolutions depend on the chosen representation, thus we collect the properties of $\alpha$ needed throughout this section in order to argue later that the results may be transferred to the nilpotent Lie group setting. All of the following claims are straightforward to prove.

\begin{proposition}\label{prop:alpha}
    \begin{enumerate}[(i)]
        \item $\alpha$ defines a right group action of $G$ on $\B$, i.e. $\alpha_e(T) = T$ and $\alpha_x(\alpha_y(T)) = \alpha_{yx}(T)$
        for $x, y \in G$ and $T \in \B$.
        \item For each $x \in G$, $\alpha_x$ is a unitary conjugation. In particular, the norms $\n{\cdot}_\B$ and $\n{\cdot}_\Tc$ are invariant under $\alpha_x$.
        \item For each $T \in \B$, the map $G \to \B, \, x \mapsto \alpha_x(T)$ is strongly continuous.
        \item For $x \in G$ and $\psi_1, \psi_2, \vp_1, \vp_2 \in \Hi$, we have
        \[
            \ip{\alpha_x(\vp_2 \otimes \vp_1)\psi_1, \psi_2} = C_{\psi_1, \vp_1}(x) \overline{C_{\psi_2, \vp_2}(x)}.
        \]
    \end{enumerate}
\end{proposition}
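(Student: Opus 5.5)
We verify the four items of Proposition \ref{prop:alpha} one at a time, using only that $\pi$ is a strongly continuous unitary homomorphism.

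For (i), $\pi(e) = I$ gives $\alpha_e(T) = T$ at once. Since $\pi$ is a homomorphism, $\pi(yx) = \pi(y)\pi(x)$, and taking adjoints gives $\pi(yx)^* = \pi(x)^*\pi(y)^*$. Hence
\[
    \alpha_x(\alpha_y(T)) = \pi(x)^*\pi(y)^* T \pi(y)\pi(x) = \pi(yx)^* T \pi(yx) = \alpha_{yx}(T),
\]
which is the right-action property.

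For (ii), $\alpha_x$ is conjugation by the unitary $\pi(x)$, so it is a $*$-automorphism of $\B$.
\begin{itemize}
    \item Operator-norm invariance follows from $\n{U^*TU}_\B \le \n{T}_\B$ together with the inverse conjugation.
    \item For the trace norm, $\abs{U^*TU} = U^*\abs{T}U$, because $(U^*TU)^*(U^*TU) = U^*T^*TU$ and the square root commutes with unitary conjugation.
    \item Therefore $\n{U^*TU}_\Tc = \sumn \ip{\abs{T}U\vp_n, U\vp_n}$. Since $(U\vp_n)$ is again an orthonormal basis, basis independence shows this equals $\n{T}_\Tc$.
    \item The same argument applies with the singular value decomposition of Theorem \ref{thm:svd}: conjugation maps $\psi_n \otimes \vp_n$ to $U^*\psi_n \otimes U^*\vp_n$ and preserves the singular values.
\end{itemize}

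For (iii), fix $\xi \in \Hi$ and $x_0 \in G$, and split
\[
    \alpha_x(T)\xi - \alpha_{x_0}(T)\xi = \pi(x)^*T(\pi(x) - \pi(x_0))\xi + (\pi(x)^* - \pi(x_0)^*)T\pi(x_0)\xi.
\]
The first term has norm at most $\n{T}_\B \n{(\pi(x)-\pi(x_0))\xi}$, which tends to $0$ by strong continuity of $\pi$. For the second term, set $\eta := T\pi(x_0)\xi$ and write $\pi(x)^* = \pi(x^{-1})$. Then
\[
    \n{(\pi(x^{-1}) - \pi(x_0^{-1}))\eta} \to 0,
\]
since inversion is continuous on $G$ and $\pi$ is strongly continuous. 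This step is the only point needing care: the adjoint is not strongly continuous in general, but here it is the unitary $\pi(x^{-1})$, which is.

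For (iv), compute directly from the definition of the rank-one tensor:
\[
    \alpha_x(\vp_2\otimes\vp_1)\psi_1 = \ip{\pi(x)\psi_1, \vp_1}\,\pi(x)^*\vp_2 = C_{\psi_1,\vp_1}(x)\,\pi(x)^*\vp_2.
\]
Pairing with $\psi_2$ gives
\[
    \ip{\alpha_x(\vp_2\otimes\vp_1)\psi_1, \psi_2} = C_{\psi_1,\vp_1}(x)\ip{\vp_2, \pi(x)\psi_2} = C_{\psi_1,\vp_1}(x)\,\overline{C_{\psi_2,\vp_2}(x)}.
\]

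None of these steps is a real obstacle. The only subtle points are the adjoint continuity in (iii), handled above, and the identity $\abs{U^*TU} = U^*\abs{T}U$ in (ii).
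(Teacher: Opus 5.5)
Your proof is correct in all four parts. The paper gives no proof and simply calls these claims straightforward; your direct verification is the routine argument intended: the homomorphism property for (i), $\abs{U^*TU} = U^*\abs{T}U$ together with basis independence of the trace norm for (ii), the two-term splitting with $\pi(x)^* = \pi(x^{-1})$ for (iii), and the rank-one computation for (iv).
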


%%%%%%%%%%%%%%%%%%%%%%%%%%%%%%%%%%%%%%%%%%%%%%%%%%%%%%%%%%%%%%%%%%%%%%%

\subsection{Operator-operator convolutions}\label{sub:ooconv}

\begin{definition}\label{def:ooconv}
    Let $S \in \Tc$ and $T \in \B$. Then the convolution $S * T: G \to \C$ is defined by
    \[
        (S * T)(x) := \tr(S \alpha_x(T))
    \]
    for all $x \in G$.
\end{definition}

This definition is inspired, in part, by the definition of the usual function convolution. The translation of the second function is replaced by the application of $\alpha_x$ to the second operator, and the integral becomes the trace. This is an important idea coined early on by Segal \cite{Segal}, which permeates this entire section and lets us view many results as analogs of ones for the known function convolution.

We cite \cite[Lemma 3.10, Lemma 3.11]{Halvdansson}.

\begin{lemma}\label{la:ooconvcited}
    Let $S \in \Tc$ and $T \in \B$.
    \begin{enumerate}[(i)]
        \item We have $S * T \in L^\infty(G)$ with $\n{S * T}_{L^\infty(G)} \leq \n{S}_\Tc \n{T}_\B$.
        \item If $S$ and $T$ are non-negative, then so is $S * T$.
    \end{enumerate}
\end{lemma}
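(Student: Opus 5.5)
Both parts come straight from the definition $(S*T)(x)=\tr(S\alpha_x(T))$ together with the facts on the trace class recalled in Section \ref{sub:traceclass} and Proposition \ref{prop:alpha}.

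\emph{Part (i).} I first check that $S*T$ is measurable, in fact continuous. Using the singular value decomposition of Theorem \ref{thm:svd}, write $S=\sumn s_n(\psi_n\otimes\vp_n)$ with convergence in $\n{\cdot}_\Tc$. Then
\[
    (S*T)(x)=\sumn s_n\ip{\alpha_x(T)\psi_n,\vp_n}.
\]
Each summand is continuous in $x$, because $x\mapsto\alpha_x(T)$ is strongly continuous by Proposition \ref{prop:alpha} (iii). Each summand is also bounded by $s_n\n{T}_\B$, since $\alpha_x$ is a unitary conjugation and so $\n{\alpha_x(T)}_\B=\n{T}_\B$. As $\seqn{s_n}\in\ell^1(\N)$, the series converges uniformly in $x$, so $S*T$ is continuous and hence Borel measurable.

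For the bound, the ideal property and the fact that $\tr$ has norm $1$ give, for every $x\in G$,
\[
    \abs{(S*T)(x)}\le\n{S\alpha_x(T)}_\Tc\le\n{S}_\Tc\n{\alpha_x(T)}_\B=\n{S}_\Tc\n{T}_\B.
\]
Taking the supremum over $x$ yields $\n{S*T}_{L^\infty(G)}\le\n{S}_\Tc\n{T}_\B$.

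\emph{Part (ii).} If $T\ge0$, then $\alpha_x(T)=\pi(x)^*T\pi(x)\ge0$. If also $S\ge0$, Theorem \ref{thm:svd} gives $S=\sumn s_n(\vp_n\otimes\vp_n)$ with $s_n\ge0$. Hence
\[
    \tr(S\alpha_x(T))=\sumn s_n\ip{\alpha_x(T)\vp_n,\vp_n}\ge0.
\]
Alternatively, $\tr(S\alpha_x(T))=\tr\bigl(S^{1/2}\alpha_x(T)S^{1/2}\bigr)\ge0$ by cyclicity of the trace.

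The only step needing any care is measurability in (i), and uniform convergence of the series handles it. Everything else is a direct estimate.
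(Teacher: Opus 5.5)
Your proof is correct. The paper does not prove this lemma itself but cites \cite[Lemma 3.10, Lemma 3.11]{Halvdansson}. Your bound (ideal property plus $\abs{\tr(\cdot)}\le\n{\cdot}_\Tc$) and your positivity argument (spectral decomposition of $S$ together with $\alpha_x(T)\ge 0$) are the standard ones.

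Your measurability step, via the uniformly convergent singular value series, is exactly the argument the paper gives for Lemma \ref{la:ooconvcont}. The one difference is that you bound the summands directly by $s_n\n{T}_\B$ rather than invoking part (i), so there is no circularity.

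Your alternative for (ii) uses cyclicity of the trace with two Hilbert--Schmidt factors. That is true, but it goes beyond the $\Tc$--$\B$ version recalled in Section \ref{sub:traceclass}. Writing $\alpha_x(T)=R^2$ with $R\ge 0$ and using $\tr(R\cdot RS)=\tr(RS\cdot R)=\tr(RSR)\ge 0$ stays within that version.
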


Further, we need continuity of the convolution between two operators. This is reminiscent of the fact that the convolution between an integrable function and a bounded one is continuous, if $L^\infty(G)$ is viewed as correspondent to $\B$ and $L^1(G)$ to $\Tc$.

\begin{lemma}\label{la:ooconvcont}
    Let $S \in \Tc$ and $T \in \B$. Then $S * T$ is continuous.
\end{lemma}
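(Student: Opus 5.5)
We need to show that $x \mapsto \tr(S\alpha_x(T))$ is continuous. The plan is to reduce, by density, to rank-one $S$, where continuity follows from the strong continuity of $x \mapsto \alpha_x(T)$ in Proposition \ref{prop:alpha} (iii). Since the convergence we get is uniform, the limit inherits continuity.

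\emph{Rank-one case.} First let $S = \psi \otimes \vp$ with $\psi, \vp \in \Hi$. Then
\[
    (S * T)(x) = \tr\bigl((\psi \otimes \vp)\alpha_x(T)\bigr) = \tr\bigl(\alpha_x(T)(\psi \otimes \vp)\bigr) = \tr\bigl((\alpha_x(T)\psi) \otimes \vp\bigr) = \ip{\alpha_x(T)\psi, \vp}.
\]
This uses commutativity of the trace and the formula $\tr(\psi'\otimes\vp) = \ip{\psi',\vp}$. By Proposition \ref{prop:alpha} (iii), $x \mapsto \alpha_x(T)\psi$ is norm-continuous in $\Hi$, so $x \mapsto \ip{\alpha_x(T)\psi, \vp}$ is continuous. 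By linearity, $S * T$ is continuous for every finite linear combination $S$ of rank-one tensors.

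\emph{General case.} Take $S \in \Tc$ arbitrary. By Theorem \ref{thm:svd}, the partial sums $S_N := \sum_{n \le N} s_n (\psi_n \otimes \vp_n)$ converge to $S$ in $\n{\cdot}_\Tc$. For every $x \in G$,
\[
    \abs{(S*T)(x) - (S_N * T)(x)} = \abs{\tr((S - S_N)\alpha_x(T))} \le \n{S - S_N}_\Tc \n{\alpha_x(T)}_\B = \n{S-S_N}_\Tc \n{T}_\B.
\]
Here we use that $\tr$ has norm $1$, the ideal property of $\Tc$, and Proposition \ref{prop:alpha} (ii). The bound does not depend on $x$, so $S_N * T \to S * T$ uniformly on $G$. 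A uniform limit of continuous functions is continuous, which gives the claim.

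\emph{Main obstacle.} There is no real obstacle. The only point requiring care is that Proposition \ref{prop:alpha} (iii) gives only strong continuity, not norm continuity, of $x \mapsto \alpha_x(T)$. That is why we pass through rank-one operators, where strong continuity suffices, instead of estimating $\n{\alpha_x(T) - \alpha_y(T)}_\B$ directly.
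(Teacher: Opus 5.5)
Your proof is correct and follows essentially the same route as the paper: a singular value decomposition, continuity of each rank-one term $x \mapsto \ip{\alpha_x(T)\psi_n, \vp_n}$ from strong continuity, and uniform convergence of the resulting series. The only cosmetic difference is that you get uniformity from the trace-norm tail estimate $\n{S-S_N}_\Tc \n{T}_\B$, whereas the paper bounds each term by $s_n \n{T}_\B$ and sums.
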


\begin{proof}
    Let $S = \sumn s_n (\psi_n \otimes \vp_n)$ be a singular value decomposition of $S$ as described in Theorem \ref{thm:svd}. Because the series
    \[
        \sumn s_n (\psi_n \otimes \vp_n) \alpha_x(T)
    \]
    converges with respect to the trace norm, we can distribute the convolution into the sum, so
    \begin{equation}\label{eq:ooconvdistr}
        (S * T)(x) = \sumn s_n ((\psi_n \otimes \vp_n) * T)(x)
    \end{equation}
    for all $x \in G$. For every $n \in \N$, we have
    \[
        ((\psi_n \otimes \vp_n) * T)(x) = \tr(\alpha_x(T)\psi_n \otimes \vp_n) = \ip{T \pi(x)\psi_n, \pi(x)\vp_n},
    \]
    which is continuous in $x \in G$ because $\pi$ is strongly continuous and $T$ is bounded. Further, by Lemma \ref{la:ooconvcited} (i), the function $(\psi_n \otimes \vp_n) * T$ is bounded by $\n{T}_\B$, thus $S * T$ is continuous because the series \eqref{eq:ooconvdistr} converges uniformly.
\end{proof}

Another important question is that of integrability of the convolution $S * T$. In the unimodular case, and as stated by Werner \cite{Werner} in his first paper on the topic, it is true that the convolution between two trace class operators is in $L^1(G)$. If $G$ is not unimodular however, the problem is more subtle and requires the concept of an \newterm{admissible} operator, introduced in \cite{QHAAffine}. Recall that $D$ denotes the Duflo-Moore operator associated to $\pi$.

\begin{definition}\label{def:adm}
    An operator $A \in \B$ is called \newterm{admissible} if $A$ maps $\dom(D)$ into $\dom(\D)$ and the densely defined operator $\D A \D$ extends to a trace class operator on $\Hi$. By $\A$, we denote the set of all admissible operators on $\Hi$.
\end{definition}

Indeed, this extends the notion of admissibility from elements of the representation space $\Hi$: For any $\psi, \vp \in \Hi\setminus\set{0}$, the operator $\psi \otimes \vp$ is admissible if and only if $\psi, \vp \in \dom(\D)$ (cf. \cite[Proposition 4.7]{Halvdansson}).

The author could not quite follow the proof of the following statement that was presented in \cite{Halvdansson}, so instead, for the sake of completeness, we adapt an argument given for the special case of the affine group in \cite{QHAAffine}. 

\begin{theorem}\label{thm:ooconvl1}
    Let $S \in \Tc$ and $A \in \A$. Then $S * A \in L^1(G)$ with
    \[
        \n{S * A}_{L^1(G)} \le \n{S}_\Tc \n{\D A \D}_\Tc
    \]
    and
    \[
        \int_G S * A \, d\mu = \tr(S) \tr(\D A \D).
    \]
\end{theorem}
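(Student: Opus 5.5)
Our plan is to reduce everything to rank-one operators via singular value decompositions, and then apply the Duflo--Moore orthogonality relations from Theorem \ref{thm:duflomoore}.

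\emph{Decomposing $S$.} Write $S = \sumn s_n (\psi_n \otimes \vp_n)$ as in Theorem \ref{thm:svd}. By \eqref{eq:ooconvdistr} we have $(S * A)(x) = \sumn s_n ((\psi_n\otimes\vp_n)*A)(x)$ pointwise. It therefore suffices to prove, for unit vectors $\psi,\vp$, that
\[
\n{(\psi\otimes\vp)*A}_{L^1(G)} \le \n{\D A \D}_\Tc
\quad\text{and}\quad
\int_G (\psi\otimes\vp)*A\,d\mu = \ip{\psi,\vp}\tr(\D A\D).
\]
The $L^1$ bound for $S*A$ then follows from the triangle inequality. The integral formula follows from dominated convergence, since the partial sums are dominated by $\sumn s_n\abs{(\psi_n\otimes\vp_n)*A}\in L^1$, and $\sumn s_n\ip{\psi_n,\vp_n}=\tr(S)$.

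\emph{Decomposing $A$.} Next decompose $B := \D A \D \in \Tc$ as $B = \sum_m t_m (\xi_m\otimes\eta_m)$. Formally this gives $A = \sum_m t_m (D\xi_m\otimes D\eta_m)$. This is the main obstacle: $D$ is unbounded, so one must check that $\xi_m,\eta_m\in\dom(D)$ and that the identity holds in a suitable weak sense. I would try the following. Using $A(\dom D)\subseteq \dom \D$, for $u,v\in\dom(D)$ we get $\ip{Au,v} = \ip{\D A \D (Du), Dv} = \ip{B Du, Dv}$. Hence
\[
\ip{A\pi(x)\psi,\pi(x)\vp} = \sum_m t_m \ip{D\pi(x)\psi,\eta_m}\overline{\ip{D\pi(x)\vp,\xi_m}}
\]
whenever $\psi,\vp$ lie in the $\pi$-invariant core $\dom(D)$. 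To avoid requiring $\xi_m\in\dom(D)$, I move $D$ onto $\pi(x)$ using the commutation relation: $D\pi(x) = \Delta_G(x)^{1/2}\pi(x)D$. This yields
\[
((\psi\otimes\vp)*A)(x) = \Delta_G(x)\sum_m t_m\, C_{D\psi,\eta_m}(x)\,\overline{C_{D\vp,\xi_m}(x)}.
\]
The factor $\Delta_G$ must be handled carefully. Substituting $x\mapsto x^{-1}$ via the modular substitution rule absorbs $\Delta_G$, and $C_{u,w}(x^{-1}) = \overline{C_{w,u}(x)}$ converts the expression into matrix coefficients $C_{\eta_m,D\psi}$, $C_{\xi_m,D\vp}$. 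These are square integrable, because $D\psi,D\vp\in\dom(\D)$. (If the computation instead leaves $\Delta_G$ correctly placed without inversion, I would adjust; the orthogonality relations require the admissible vector in the second slot.)

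\emph{Orthogonality and density.} Cauchy--Schwarz with the orthogonality relation gives
\[
\int\abs{C_{\xi,D\vp}}\abs{C_{\eta,D\psi}}\le \n{\xi}\n{\eta}\n{\psi}\n{\vp} = 1,
\]
so the $L^1$ norm is at most $\sum_m t_m = \n{B}_\Tc$; this also justifies exchanging sum and integral. The integral of each term equals $\ip{\eta_m,\xi_m}\overline{\ip{\psi,\vp}}$ (or its conjugate, depending on the arrangement), and summing gives $\tr(B)\ip{\psi,\vp}$. Finally, extend from $\psi,\vp\in\dom(D)$ to all unit vectors by density. The map $(\psi,\vp)\mapsto(\psi\otimes\vp)*A$ is sesquilinear, and the bound $\n{(\psi\otimes\vp)*A}_{L^1(G)}\le\n{B}_\Tc\n{\psi}\n{\vp}$ makes it continuous into $L^1$. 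It agrees pointwise, via Lemma \ref{la:ooconvcont}-type continuity in $\psi,\vp$, with the limit, so both claims pass to general $\psi,\vp$.
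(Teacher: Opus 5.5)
Your proposal is correct and follows essentially the same route as the paper. Both arguments use the same steps: reduce to rank-one $S$ via the singular value decomposition; use $\ip{Au,v}=\ip{(\D A\D)Du,Dv}$ on $\dom(D)$ together with the Duflo--Moore commutation relation to extract the factor $\Delta_G(x)$; absorb it by the inversion substitution; apply Cauchy--Schwarz and the orthogonality relations; and pass from $\dom(D)$ to all of $\Hi$ by density, using the $L^\infty$ bound of Lemma \ref{la:ooconvcited} for pointwise convergence. The paper only orders these differently: rank-one with $\dom(D)$ vectors, then density, then general $S$. Your hedge resolves to the conjugate, so each term integrates to $t_m\ip{\xi_m,\eta_m}\ip{\psi,\vp}$ and the sum is $\tr(\D A\D)\ip{\psi,\vp}$, as required.
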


\begin{proof}
    Let $R \in \Tc$ be the extension of $\D A \D$ and $R = \sumn r_n (\psi_n \otimes \vp_n)$ a singular value decomposition. We proceed in three steps, considering increasingly general trace class operators for $S$, starting with the simplest case.

    \proofstep[1]{Show the claim for $S = \xi \otimes \eta$, where $\xi, \eta \in \dom(D)$.}
    For fixed $x \in G$, we have
    \begin{align*}
        (S * A)(x) 
            &= \tr((\xi \otimes \eta) \alpha_x(A)) \\
            &= \ip{\alpha_x (A) \xi, \eta}\\
            &= \ip{A \pi (x) \xi, \pi (x) \eta}.
    \end{align*}
    By the second part of Theorem \ref{thm:duflomoore} (Duflo-Moore), $\pi(x) \xi$ and $\pi(x) \eta$ are elements of $\dom(D)$, so here we can write $A = D R D$ and
    \begin{align}\label{eq:ooconvpointwise}
        (S * A)(x) 
            &= \ip{R D \pi (x) \xi, D \pi (x) \eta} \nonumber \\
            &= \Delta_G(x) \ip{R \pi (x) D \xi, \pi (x) D \eta} \nonumber \\
            &= \Delta_G(x) \sumn r_n \ip{(\psi_n \otimes \vp_n) \pi (x) D \xi, \pi (x) D \eta} \nonumber \\
            &= \Delta_G(x) \sumn r_n \ip{\alpha_{x^{-1}}(D\xi \otimes D\eta)\psi_n, \vp_n}.
    \end{align}
    We consider the $L^1$-norm of one term of the last sum and substitute the inverse using the modular function. Part (iv) of Proposition \ref{prop:alpha} then yields
    \begin{align*}
        &\int_G \abs{\Delta_G (x) \ip{\alpha_{x^{-1}}(D\xi \otimes D\eta)\psi_n, \vp_n}} d\mu(x) \\
        = \, &\int_G \abs{\ip{\alpha_x(D\xi \otimes D\eta)\psi_n, \vp_n}} d\mu (x) \\
        = \, &\ip{\abs{C_{\psi_n, D \eta}}, \abs{C_{\vp_n, D \xi}}}_{L^2(G)} \\
        \leq \, &\n{C_{\psi_n, D \eta}}_{L^2(G)} \n{C_{\vp_n, D \xi}}_{L^2(G)} \\
        = \, &\n{\psi_n} \n{\D D \eta} \n{\vp_n} \n{\D D \eta} \\
        = \, &\n{\xi} \n{\eta},
    \end{align*}
    again by Theorem \ref{thm:duflomoore}. By summing over all $n \in \N$, we then obtain
    \[
        \n{S * A}_{L^1(G)} \le \sumn r_n \n{\xi} \n{\eta} = \n{\xi \otimes \eta}_\Tc \n{R}_\Tc.
    \]
    For the integral identity, we simply integrate \eqref{eq:ooconvpointwise}, where the dominated convergence theorem together with the established integrability lets us interchange summation and integration. The same substitution as above and another application of Theorem \ref{thm:duflomoore} then yield
    \begin{align*}
        \int_G S * A \, d\mu 
            &= \int_G \Delta_G(x) \sumn r_n \ip{\alpha_{x^{-1}}(D\xi \otimes D\eta)\psi_n, \vp_n} d\mu(x) \\
            &= \int_G \sumn r_n C_{\psi_n, D \eta}(x) \overline{C_{\vp_n, D \xi}(x)} d\mu(x) \\
            &= \sumn r_n \ip{C_{\psi_n, D \eta}, C_{\vp_n, D \xi}}_{L^2(G)} \\
            &= \sumn r_n \ip{\psi_n, \vp_n} \ip{\D D \xi, \D D \eta} \\
            &= \tr (\xi \otimes \eta) \tr(R).
    \end{align*}

    \proofstep[2]{Show the claim for $S = \xi \otimes \eta$ where $\xi, \eta \in \Hi$ are arbitrary.}
    Since $\dom(D)$ is dense in $\Hi$, we can find sequences $\seqk{\xi_k}, \seqk{\eta_k} \subseteq \dom(D)$ such that $\limk \xi_k = \xi$ and $\limk \eta_k = \eta$. Then $\seqk{\xi_k \otimes \eta_k}$ converges to $\xi \otimes \eta$ in the trace norm, because the bilinear map
    \[
        \Hi \times \Hi \rightarrow \Tc, (\psi, \vp) \mapsto \psi \otimes \vp
    \]
    is bounded. Since operator-operator convolutions are continuous by Lemma \ref{la:ooconvcont}, Lemma \ref{la:ooconvcited} (i) then implies
    \[
        \limk (\xi_k \otimes \eta_k)  * A = (\xi \otimes \eta) * A
    \]
    uniformly. This sequence is further a Cauchy sequence in $L^1(G)$, as
    \begin{align*}
        \n{(\xi_k \otimes \eta_k) * A - (\xi_l \otimes \eta_l) * A}_{L^1(G)} 
            &\leq \n{(\xi_k \otimes (\eta_k - \eta_l)) * A}_{L^1(G)} + \n{((\xi_k - \xi_l) \otimes \eta_l) * A}_{L^1G)} \\
            &\leq (\n{\xi_k} \n{\eta_k - \eta_l} + \n{\xi_k - \xi_l} \n{\eta_l}) \n{R}_\Tc
    \end{align*}
    for all $k, l \in \N$ by step 1. Hence $(\xi \otimes \eta) * A \in L^1 (G)$, and both the claimed norm inequality and the integral identity follow from step 1 applied to each $(\xi_k \otimes \eta_k) * A$, and from $L^1$-convergence of $\seqk{(\xi_k \otimes \eta_k) * A}$ to $(\xi \otimes \eta) * A$.

    \proofstep[3]{Show the claim for arbitrary $S \in \Tc$.}
    Let $S = \sumn s_n (\xi_n \otimes \eta_n)$ be a singular value decomposition. As in the proof of Lemma \ref{la:ooconvcont}, we have
    \begin{equation}\label{eq:ooconvpointwise2}
        (S * A)(x) = \sumn s_n ((\xi_n \otimes \eta_n) * A)(x)
    \end{equation}
    for all $x \in G$. Then, immediately,
    \[
        \n{S * A}_{L^1(G)} \leq \sumn s_n \n{(\xi_n \otimes \eta_n) * A}_{L^1(G)} \leq \sumn s_n \n{\xi_n} \n{\eta_n} \n{R}_\Tc = \n{S}_\Tc \n{R}_\Tc
    \]
    by step 2. With this, we can integrate \eqref{eq:ooconvpointwise2} and apply the dominated convergence theorem to obtain
    \begin{align*}
        \int_G S * A \, d\mu(x) 
            &= \sumn s_n \int_G (\psi_n \otimes \vp_n) * A \, d\mu(x) \\
            &= \sumn s_n \ip{\psi_n, \vp_n} \tr(R) \\
            &= \tr(S) \tr(R)
    \end{align*}
    by the integral identity from step 2.
\end{proof}

%%%%%%%%%%%%%%%%%%%%%%%%%%%%%%%%%%%%%%%%%%%%%%%%%%%%%%%%%%%%%%%%%%%%%%%

\subsection{Function-operator convolutions}\label{sub:foconv}

For the convolution between a function and an operator, we consider two cases.

\begin{definition}\label{def:foconv1}
    Let $f \in L^1(G)$ and $T \in \B$. Then the convolution $f * T: \Hi \to \Hi$ is defined as the unique bounded operator that satisfies
    \[
        \ip{(f * T)\psi, \vp} = \int_G f(x) \ip{\alpha_x(T)\psi, \vp} d\mu(x)
    \]
    for all $\psi, \vp \in \Hi$.
\end{definition}

This definition works because the right hand side is bilinear in $(\psi, \vp)$ and bounded from above by $\n{f}_{L^1(G)} \n{T}_\B \n{\psi} \n{\vp}$. The same bound also shows that 
\[
    \n{f * T}_\B \le \n{f}_{L^1(G)} \n{T}_\B.
\]

\begin{definition}\label{def:foconv2}
    Let $g \in L^\infty(G)$ and $A \in \A$. Then the convolution $g * A$ is defined as in Definition \ref{def:foconv1}.
\end{definition}

In this case, well-definedness and the bound
\begin{equation}\label{eq:foconv2bound}
    \n{g * A}_\B \le \n{g}_{L^\infty(G)} \n{\D A \D}_\Tc
\end{equation}
were shown in \cite[Lemma 4.12]{Halvdansson} and the proof is essentially an application of Theorem \ref{thm:ooconvl1}. The following is contained in \cite[Proposition 3.4, Proposition 3.5, Lemma 3.6, Proposition 3.12]{Halvdansson}.

\begin{lemma}\label{la:foconvcited}
    \begin{enumerate}[(i)]
        \item If $f \in L^1(G)$ and $S \in \Tc$, then $f * S \in \Tc$ with
        \[
            \n{f * S}_\Tc \le \n{f}_{L^1(G)} \n{S}_\Tc
        \]
        and
        \[
            \tr(f * S) = \tr(S) \int_G f d\mu.
        \]
        \item If $f \in L^1(G)$ and $T \in \B$ are both non-negative, then $f * T$ is non-negative.
        \item Let $f, g \in L^1(G)$, $S \in \Tc$, and $T \in \B$. Then
        \[
            (f * S) * T = f * (S * T)
        \]
        and
        \[
            (f * g) * T = f * (g * T).
        \]
    \end{enumerate}
\end{lemma}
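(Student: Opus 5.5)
The plan is to realise $f * S$, for $S \in \Tc$, as an $\Tc$-valued Bochner integral. All three parts of the lemma then follow by passing bounded linear functionals through the integral, together with Fubini's theorem and the right invariance of $\mu$.

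\emph{Part (i).} First I would show that $x \mapsto \alpha_x(S)$ is continuous from $G$ into $(\Tc, \n{\cdot}_\Tc)$.
\begin{itemize}
  \item For a rank-one $S = \psi \otimes \vp$ we have $\alpha_x(\psi \otimes \vp) = \pi(x)^*\psi \otimes \pi(x)^*\vp$. This is trace-norm continuous, because $\pi$ is strongly continuous (hence so is $x \mapsto \pi(x)^* = \pi(x^{-1})$) and the tensor map $\Hi \times \Hi \to \Tc$ is bounded bilinear.
  \item For general $S$, truncate the singular value decomposition of Theorem \ref{thm:svd}. By Proposition \ref{prop:alpha} (ii) the tail is uniformly small in $x$, so the convergence is uniform and continuity passes to the limit.
\end{itemize}
Since $G$ is $\sigma$-compact, $x \mapsto f(x)\alpha_x(S)$ is then strongly measurable with $\int_G \n{f(x)\alpha_x(S)}_\Tc \, d\mu(x) = \n{f}_{L^1(G)}\n{S}_\Tc$. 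Hence the Bochner integral $B := \int_G f(x)\alpha_x(S)\, d\mu(x)$ exists in $\Tc$ and obeys the stated norm bound. Applying the bounded functional $R \mapsto \ip{R\psi, \vp}$ shows that $B$ satisfies the defining identity of Definition \ref{def:foconv1}, so $B = f * S$. Applying the bounded functional $\tr$ and using $\tr(\alpha_x(S)) = \tr(S)$ gives $\tr(f * S) = \tr(S)\int_G f \, d\mu$.

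\emph{Part (ii).} This is immediate: $\ip{(f*T)\psi,\psi} = \int_G f(x)\ip{T\pi(x)\psi, \pi(x)\psi}\, d\mu(x) \ge 0$.

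\emph{Part (iii), first identity.} For fixed $x$, the map $R \mapsto \tr(R\,\alpha_x(T))$ is a bounded functional on $\Tc$, so by part (i) it passes through the Bochner integral:
\[
((f*S)*T)(x) = \int_G f(y)\, \tr(\alpha_y(S)\alpha_x(T))\, d\mu(y).
\]
By cyclicity of the trace, $\tr(\alpha_y(S)\alpha_x(T)) = \tr\bigl(S\,\pi(y)\pi(x)^*T\pi(x)\pi(y)^*\bigr) = \tr(S\,\alpha_{xy^{-1}}(T))$. The right-hand side above is therefore exactly $(f * (S*T))(x)$ in the sense of Definition \ref{def:ffconv}; here $S*T \in L^\infty(G)$ by Lemma \ref{la:ooconvcited}, so the convolution makes sense.

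\emph{Part (iii), second identity.} Fix $\psi, \vp \in \Hi$ and expand
\[
\ip{((f*g)*T)\psi,\vp} = \int_G \int_G f(y) g(xy^{-1}) \ip{\alpha_x(T)\psi,\vp} \, d\mu(y)\, d\mu(x).
\]
Fubini applies since the integrand is bounded by $\abs{f(y)}\abs{g(xy^{-1})}\n{T}_\B\n{\psi}\n{\vp}$, which is integrable. Substitute $x = zy$ using right invariance, and use $\alpha_{zy} = \alpha_y \circ \alpha_z$ from Proposition \ref{prop:alpha} (i). The inner integral then becomes $\ip{\alpha_y(g*T)\psi,\vp}$: indeed $\ip{\alpha_y(g*T)\psi,\vp} = \ip{(g*T)\pi(y)\psi, \pi(y)\vp}$, and we unfold Definition \ref{def:foconv1} with these vectors. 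This yields $\ip{(f*(g*T))\psi,\vp}$.

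The main obstacle is the trace-norm continuity and measurability in part (i). That step is what justifies treating $f * S$ as a Bochner integral and interchanging the trace, and the pairing with $\alpha_x(T)$, with integration. Everything after it is bookkeeping with the action $\alpha$ and right invariance.
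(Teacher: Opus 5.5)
The paper gives no proof of this lemma; it is quoted from \cite{Halvdansson} (Propositions 3.4, 3.5, Lemma 3.6 and Proposition 3.12). Your proposal is therefore a self-contained substitute, and it is a correct proof of all three parts.

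The central choice you make is to upgrade the weak Definition \ref{def:foconv1} to a $\Tc$-valued Bochner integral. You first establish trace-norm continuity of $x\mapsto\alpha_x(S)$ via rank-one terms and a uniform SVD tail. From there, a single fact, that bounded functionals commute with Bochner integrals, gives everything in (i): membership in $\Tc$, the norm bound, and the trace formula via $\tr(\alpha_x(S))=\tr(S)$. The same fact licenses moving $R\mapsto\tr(R\,\alpha_x(T))$ inside the integral in (iii). Your key computations there are correct: the cyclicity identity $\tr(\alpha_y(S)\alpha_x(T))=\tr(S\,\alpha_{xy^{-1}}(T))$, and the substitution $x=zy$ combined with $\alpha_{zy}=\alpha_y\circ\alpha_z$.

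Where the paper does comparable manipulations itself, in the proof of Lemma \ref{la:traceofsquare}(ii), it stays at the weak level. It expands the trace in an orthonormal basis and swaps sum and integral by dominated convergence. With only the weak definition, showing $f*S\in\Tc$ would need a separate argument. One option is to bound $\abs{\tr((f*S)K)}\le\n{f}_{L^1(G)}\n{S}_\Tc\n{K}_\B$ for finite-rank $K$ and invoke the duality $\Tc\cong\mathcal{K}(\Hi)^*$. Your Bochner framework avoids that step entirely.

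The adjoint-representation picture of Remark \ref{rm:convwithadjrepr}, $f*S=\operatorname{Ad}_\pi(f)S$, would give the second associativity law from multiplicativity of the integrated representation. However, that only works for Hilbert--Schmidt operators, not for the general $T\in\B$ the lemma requires. Your direct Fubini argument handles that general case.
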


In particular, the associativity given by (iii) shows the interplay between all three kinds of convolutions and suggests that the new definitions for the operator convolutions are appropriate. Similar statements to (ii) and (iii) can be formulated for the setting of Definition \ref{def:foconv2} without much additional work, however we restrict ourselves to the material needed for our main result.

We briefly want to outline a connection between the operator convolutions discussed in this section and the adjoint representation of $\pi$.

\begin{remark}\label{rm:convwithadjrepr}
    We denote by $\Hs$ the \newterm{Hilbert-Schmidt class} of operators on $\Hi$ and this space is equipped with the inner product $\ip{S, T}_\Hs := \tr(S T^*)$ for $S, T \in \Hs$, making it a Hilbert space (for details on $\Hs$ and its relation to the trace class, see \cite{Schatten}). The \newterm{adjoint representation} of $\pi$ is a unitary representation of $G$ on $\Hs$ defined by
    \[
        \operatorname{Ad}_\pi : G \to \mathcal{U}(\Hs), \, \operatorname{Ad}_\pi(x)T := \pi(x) T \pi(x)^*.
    \]
    $\operatorname{Ad}_\pi$ is also sometimes written as the tensor product $\pi \otimes \overline{\pi}$.

    If $S, T \in \Hs$, then their product is trace class, so we can define a convolution $S * T$ analogously to Definition \ref{def:ooconv}. But then
    \[
        (S * T)(x) = \tr (S \pi (x)^* T \pi (x)) = \tr ( \operatorname{Ad}_\pi (x)(S) T) = \ip{\operatorname{Ad}_\pi (x) S, T^*}_{\Hs},
    \]
    which is the matrix coefficient of $S$ and $T^*$ with respect to the adjoint representation. Thus, every operator $A \in \Hs \cap \A$ is admissible for the representation $\operatorname{Ad}_\pi$ in the classical sense, but the converse is false in general.

    Further, if $f \in L^1(G)$ and $S \in \Hs$, we can consider the integrated representation of $\operatorname{Ad}_\pi$, which represents $L^1(G)$ in $\B(\Hs)$. By definition, we have
    \begin{align*}
        \ip{\operatorname{Ad}_\pi (f) (S) \psi, \vp} 
            &= \tr (\operatorname{Ad}_\pi (f) (S) ( \psi \otimes \vp )) \\
            &= \ip{\operatorname{Ad}_\pi (f) S, \vp \otimes \psi}_{\Hs} \\
            &= \int_G f(x) \ip{\operatorname{Ad}_\pi (x^{-1} ) S, \vp \otimes \psi}_{\Hs} \, d\mu (x) \\
            &= \int_G f(x) \tr ( \alpha_x (S) ( \psi \otimes \vp )) d\mu (x) \\
            &= \int_G f(x) \ip{\alpha_x (S) \psi, \vp} d\mu (x) \\
            &= \ip{(f * S) \psi, \vp}
    \end{align*}
    for all $\psi, \vp \in H$, so $f * S = \operatorname{Ad}_\pi(f)S$. For the unimodular case, this was also noticed in \cite[Section A.4]{OoconvasIntegrated}.
    
    With these observations, some of the results from this section can be obtained by applying known facts about matrix coefficients and integrated representations of unitary representations, however this limits us to integrable functions and Hilbert-Schmidt operators, and these classes are, in general, too restrictive.
\end{remark}

Keeping in mind that later, we want to apply our main result to nilpotent Lie groups, where the representation is only square-integrable over the quotient with respect to the center, we note the following.

\begin{remark}\label{rm:convmodz}
    Let $G$ be a simply connected, connected, nilpotent Lie group and $\pi$ an irreducible representation, square-integrable modulo its center $Z$. We claim that it is possible to transfer every statement mentioned in this section by replacing $G$ with $G/Z$. To see this, let $\chi: G \to \mathbb{T}$ be the character that determines the action of $\pi|_Z$, fix a smooth right inverse $s$ of the quotient map as described in Section \ref{sub:nilpotent}, and define
    \[
        \dot{\alpha}: G/Z \times \B \to \B, \, (\dot{x}, T) \mapsto \dot{\alpha}_{\dot{x}}(T) := \alpha_{s(\dot{x})}(T) = \pi(s(\dot{x}))^* T \pi(s(\dot{x})).
    \]
    Because
    \begin{equation}\label{eq:alphainv}
        \alpha_{xz}(T) = \pi(xz)^* T \pi(xz) = \overline{\chi(z)} \chi(z) \pi(x)^* T \pi(x) = \alpha_x(T)
    \end{equation}
    for all $x \in G$, $z \in Z$, we see that $\alpha$ is invariant under translation by $Z$, so $\dot{\alpha}$ does not depend on the specific choice of the section $s$. Further, $\dot{\alpha}$ has all properties described for $\alpha$ in Proposition \ref{prop:alpha}: (i) follows from the corresponding fact about $\alpha$ together with \eqref{eq:salmosthom} and \eqref{eq:alphainv}, (ii) is trivial, (iii) follows from continuity of $s$, and (iv) is true if we replace the matrix coefficients by their compositions with $s$ -- exactly what is necessary for Theorem \ref{thm:duflomooremodz}, and therefore for the proof of Theorem \ref{thm:ooconvl1} to remain intact. This allows us to define operator convolutions with respect to $\pi$ using $\dot{\alpha}$ instead of $\alpha$ and all results carry over.
\end{remark}

%%%%%%%%%%%%%%%%%%%%%%%%%%%%%%%%%%%%%%%%%%%%%%%%%%%%%%%%%%%%%%%%%%%%%%%
%%%%%%%%%%%%%%%%%%%%%%%%%%%%%%%%%%%%%%%%%%%%%%%%%%%%%%%%%%%%%%%%%%%%%%%

\section{Main result}\label{sec:main}

As outlined in the introduction, we now want to analyze the eigenvalues of a sequence $\seqk{E_k * S}$ of function-operator convolutions, where $E * S$ is short hand for the convolution with the indicator function $\chi_E * S$. For each $k \in \N$, $E_k$ will be a measurable subset of $G$ with $0 < \mu(E_k) < \infty$, and $S$ a density operator, defined as follows.

\begin{definition}\label{def:densityop}
    A non-negative operator $S \in \Tc \cap \A$ that satisfies $\tr(\D S \D) = 1$ is called a \newterm{density operator}.
\end{definition}

Thus, a density operator combines admissibility, which allows for integrability of operator-operator convolutions by Theorem \ref{thm:ooconvl1}, with membership in the trace class, which makes the convolution with an integrable function again trace class by Lemma \ref{la:foconvcited} (i). It is noteworthy that our arguments still work if $\tr (\D S \D )$ is only assumed to be positive instead of equal to $1$, and this would yield that the eigenvalues accumulate below $\tr ( \D S \D )$ instead of below $1$. We stick with Definition \ref{def:densityop}, though, for ease of argument and because historically, density operators have been defined in this way (see e.g., \cite{DensityOps}).

The next Lemma is the basis of one direction of Theorem \ref{thm:main}.

\begin{lemma}\label{la:boundtrrhot}
    Let $T \in \Tc$ be non-negative and non-zero with $\n{T}_\B \leq 1$ and with eigenvalues $\seqn{\lambda_n}$. Further let $\rho: [0, 1] \to \R$ be a bounded Borel function that satisfies $\abs{\rho(t)} \le t$ for all $t \in [0, 1]$. Then $\rho(T)$ is again trace class and
    \[
        \frac{\tr(\rho(T))}{\tr(T)} \leq 1 - \frac{\#\set{n \mid \lambda_n > 1-\delta}}{\tr(T)} (1 - \delta - \sup_{1-\delta < t \le 1} \rho(t))
    \]
    for every $\delta > 0$.
\end{lemma}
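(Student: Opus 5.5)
The plan is to work entirely with the eigenvalue sequence, using the functional calculus identity \eqref{eq:tracefctcalc}. Since $T$ is non-negative and trace class, Theorem \ref{thm:svd} gives eigenvalues $\lambda_n \ge 0$ with $\sumn \lambda_n = \tr(T) > 0$. Because $\n{T}_\B \le 1$, all $\lambda_n$ lie in $[0,1]$, so $\rho$ is defined on the spectrum of $T$. The bound $\abs{\rho(\lambda_n)} \le \lambda_n$ then shows $\seqn{\rho(\lambda_n)} \in \ell^1(\N)$. Hence $\rho(T) \in \Tc$ and $\tr(\rho(T)) = \sumn \rho(\lambda_n)$, which settles the first claim.

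For the inequality, set $N := \#\set{n \mid \lambda_n > 1-\delta}$ and $M := \sup_{1-\delta < t \le 1} \rho(t)$. We may assume $\delta < 1$ and $N < \infty$; otherwise we need to check separately that the statement is trivial or read with the natural convention. Note that $N$ is finite whenever $\delta < 1$, since $\lambda_n \to 0$ and $\sum \lambda_n < \infty$. If the index set in the supremum were empty, the bound is read with $N = 0$, and then it is just $\tr(\rho(T)) \le \tr(T)$.

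Split the trace sum into the indices $I := \set{n \mid \lambda_n > 1-\delta}$ and their complement. On the complement use $\rho(\lambda_n) \le \lambda_n$. On $I$ use $\rho(\lambda_n) \le M$. This gives
\[
    \tr(\rho(T)) \le \sum_{n \notin I} \lambda_n + N M = \tr(T) - \sum_{n \in I} \lambda_n + N M .
\]
Since $\lambda_n > 1-\delta$ on $I$, we have $\sum_{n \in I} \lambda_n \ge N(1-\delta)$. Therefore
\[
    \tr(\rho(T)) \le \tr(T) - N\bigl(1-\delta - M\bigr).
\]
Dividing by $\tr(T) > 0$ yields the claim.

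There is no real obstacle here; the only care needed is bookkeeping. One must make sure $M$ is finite, which holds because $\rho$ is bounded. The degenerate cases $\delta \ge 1$ and $N = \infty$ must also be handled. For $\delta \ge 1$ with $\rho$ bounded, $N$ may be infinite only when infinitely many $\lambda_n$ are positive, and then the right-hand side should be read as $-\infty$ only if $1-\delta-M > 0$. That is impossible in that case, because $1 - \delta \le 0 \le$ the values of $\rho$ near positive eigenvalues can be.
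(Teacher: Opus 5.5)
Your proof is correct and is the same argument as the paper's: split the trace sum at the threshold $1-\delta$, bound $\rho(\lambda_n)\le\lambda_n$ below it and $\rho(\lambda_n)\le M$ above it, and use $\sum_{\lambda_n>1-\delta}\lambda_n\ge N(1-\delta)$ before dividing by $\tr(T)$.

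Your degenerate-case remarks are not needed. The paper only applies the lemma with $\delta<1/2$, and the supremum's range is never empty since it contains $t=1$. Your claim for $\delta\ge 1$ is also slightly inaccurate: for $\delta>1$, every index is counted, including those with $\lambda_n=0$. The inequality is still trivial there, because $\rho(t)\ge -t$ gives $M\ge 0$ and hence $1-\delta-M\le 0$.
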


\begin{proof}
    Since $T$ is non-negative with $\n{T}_\B \le 1$, all eigenvalues lie in $[0, 1]$, so $\rho(T)$ is well-defined via the functional calculus. By the assumption $\abs{\rho(t)} \le t$ for all $t \in [0, 1]$, $\rho$ clearly preserves summability of sequences in $[0, 1]$, so $\rho(T) \in \Tc$. Now consider the decomposition
    \[
        \tr(\rho(T)) = \sum_{\lambda_n \le 1-\delta} \rho(\lambda_n) + \sum_{\lambda_n > 1-\delta} \rho(\lambda_n).
    \]
    For the first term, we use our assumption on $\rho$ again to obtain
    \begin{align*}
        \sum_{\lambda_n \le 1-\delta} \rho(\lambda_n) 
            &\le \sum_{\lambda_n \leq 1-\delta} \lambda_n \\
            &= \tr(T) - \sum_{\lambda_n > 1-\delta} \lambda_n \vphantom{\int}\\
            &\le \tr(T) - \#\set{n \mid \lambda_n > 1-\delta} (1 - \delta) \vphantom{\int},
    \end{align*}
    and the second term is bounded by
    \[
        \sum_{\lambda_n > 1-\delta} \rho(\lambda_n) \leq \#\set{n \mid \lambda_n > 1-\delta} \sup_{1-\delta < t \le 1} \rho(t).
    \]
    Adding up and dividing by $\tr(T) \ne 0$ yields the claim.
\end{proof}

We need one more technical step before tackling our main result, and this can partly be found in \cite[Lemma 5.16, Lemma 5.18]{Halvdansson}

\begin{lemma}\label{la:traceofsquare}
    Let $S$ be a density operator. 
    \begin{enumerate}[(i)]
        \item The function $h_S := \tr(S)^{-1} (S * S)$ is well defined, continuous, non-negative, symmetric, and integrable with total integral $1$, and it satisfies $h_S(e) > 0$.
        \item For any $E \in \mathfrak{B}(G)$ with $\mu(E) < \infty$, we have
        \[
            \tr((E * S)^2) = \tr(S) \int_G h_S(x) \mu(E \cap x^{-1}E) d\mu(x)
        \]
    \end{enumerate}
\end{lemma}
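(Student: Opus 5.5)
The plan is to derive everything in (i) from the results of Section \ref{sec:opconv}. For (ii), I would first establish a trace-pairing identity for $E * S$ and then make a change of variables.

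\textbf{Part (i).} Since $S \in \Tc$ and $S \in \B$, Definition \ref{def:ooconv} makes $S * S$ well defined. Because $S$ is admissible with $\tr(\D S \D) = 1$, we have $S \neq 0$; as $S \geq 0$, this gives $\tr(S) > 0$, so $h_S$ is well defined.
\begin{itemize}
    \item Continuity follows from Lemma \ref{la:ooconvcont}, and non-negativity from Lemma \ref{la:ooconvcited} (ii).
    \item For symmetry, I use cyclicity of the trace. Conjugating by $\pi(x)$ gives
    \[
        (S*S)(x^{-1}) = \tr(S \pi(x) S \pi(x)^*) = \tr(\pi(x)^* S \pi(x) S) = (S*S)(x).
    \]
    \item Integrability and the total integral come from Theorem \ref{thm:ooconvl1} with $A = S$:
    \[
        \int_G S * S \, d\mu = \tr(S)\tr(\D S \D) = \tr(S).
    \]
    \item Finally, $h_S(e) = \tr(S^2)/\tr(S) > 0$, because $S$ is non-negative and non-zero.
\end{itemize}

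\textbf{Part (ii), pairing identity.} By Lemma \ref{la:foconvcited} (i), $E * S$ is trace class. I would show that for every $T \in \Tc$,
\[
    \tr(T (E * S)) = \int_E (T * S)(x) \, d\mu(x).
\]
For $T = \psi \otimes \vp$, the left side is $\ip{(E*S)\psi, \vp}$, and the identity is exactly Definition \ref{def:foconv1}. For general $T$, I take a singular value decomposition $T = \sumn t_n (\psi_n \otimes \vp_n)$ from Theorem \ref{thm:svd}. The series converges in trace norm, so the left side splits into the corresponding sum. On the right side, each term satisfies $\abs{((\psi_n\otimes\vp_n) * S)(x)} \le \n{S}_\B$ by Lemma \ref{la:ooconvcited} (i), and $\mu(E) < \infty$. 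Dominated convergence therefore lets me exchange sum and integral, exactly as in the proof of Lemma \ref{la:ooconvcont}.

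\textbf{Part (ii), applying it.} Take $T = E * S$. Then I need $((E*S) * S)(x) = \tr((E*S)\alpha_x(S))$. Applying the pairing identity again, now with $T = \alpha_x(S)$, gives
\[
    \tr((E*S)\alpha_x(S)) = \int_E \tr(\alpha_y(S)\alpha_x(S)) \, d\mu(y).
\]
I then use the computation
\[
    \tr(\alpha_y(S)\alpha_x(S)) = \tr\bigl(S \,\pi(y)\pi(x)^* S \pi(x)\pi(y)^*\bigr) = (S*S)(xy^{-1}),
\]
which is cyclicity of the trace together with Proposition \ref{prop:alpha} (i). Combining these,
\[
    \tr((E*S)^2) = \int_E \int_E (S*S)(xy^{-1}) \, d\mu(y) \, d\mu(x).
\]

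\textbf{Change of variables.} The integrand is bounded and $E \times E$ has finite measure, so Fubini applies. For fixed $y$, I substitute $x = zy$; right invariance of $\mu$ gives
\[
    \int_E \int_G \chi_E(zy)\,(S*S)(z) \, d\mu(z) \, d\mu(y).
\]
Swapping the integrals again (the integrand is dominated by $\chi_E(y)\,(S*S)(z)$, which is integrable by part (i)) gives
\[
    \int_G (S*S)(z)\, \mu(\set{y \in E \mid zy \in E}) \, d\mu(z).
\]
The inner set is $E \cap z^{-1}E$, so the right side equals $\tr(S)\int_G h_S(z)\,\mu(E\cap z^{-1}E)\,d\mu(z)$, which is the claim.

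The main point needing care is the pairing identity for general trace class $T$, and tracking the order of $x$ and $y$ in $\alpha$ so that the final set comes out as $E \cap z^{-1}E$ rather than $E \cap Ez^{-1}$.
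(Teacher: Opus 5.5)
Your proof is correct. Part (i) and the closing Fubini/right-invariance step of (ii) coincide with the paper's argument.

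The difference is in how you reach $\tr((E*S)^2)=\int_E\int_E (S*S)(xy^{-1})\,d\mu(y)\,d\mu(x)$. The paper first computes $(S*(E*S))(y)=\int_E (S*S)(xy)\,d\mu(x)$ by expanding the trace in an orthonormal basis. It then invokes the cited associativity of Lemma \ref{la:foconvcited} (iii), evaluating $(E*S)*(E*S)=\chi_E*(S*(E*S))$ at $e$. You instead prove one duality identity, $\tr(T(E*S))=\int_E (T*S)\,d\mu$ for $T\in\Tc$, and apply it twice, with $T=E*S$ and with $T=\alpha_x(S)$. Your first application is exactly the instance of associativity the paper uses, since $(S*(E*S))(y^{-1})=((E*S)*S)(y)$ by cyclicity of the trace.

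What your route buys is self-containedness. It rests only on Definition \ref{def:foconv1}, the singular value decomposition and dominated convergence, not on Halvdansson's associativity result. It also makes explicit the pairing $\tr(T(f*S))=\int_G f\,(T*S)\,d\mu$ that underlies both steps. The paper's route is shorter given the cited machinery and fits its theme of operator convolutions mirroring the associativity of ordinary convolution.
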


\begin{proof}
    The density operator $S$ is non-negative, non-zero, and trace class, so $\tr(S) > 0$. Thus, $h_S$ is well-defined, non-negative, and continuous by Lemmas \ref{la:ooconvcited} (ii) and \ref{la:ooconvcont}. It is symmetric because
    \[
        (S * S)(x^{-1}) = \tr(S \pi(x)S\pi(x)^*) = \tr(S \pi(x)^*S\pi(x))
    \]
    for all $x \in G$ by commutativity of the trace, $\int_G h_S d\mu = 1$ follows from Theorem \ref{thm:ooconvl1}, and the observation $h_S(e) = \tr(S^2) > 0$ concludes part (i).

    For part (ii), we begin by taking $y \in G$ and an orthonormal basis $\seqn{\vp_n}$ of $\Hi$, and computing
    \begin{align*}
        (S * (E * S))(y) 
            &= \tr(S \alpha_y(E * S)) \vphantom{\sumn}\\
            &= \sumn \ip{S \alpha_y(E * S) \vp_n, \vp_n} \\
            &=\sumn \int_E \ip{\alpha_x(S) \pi(y) \vp_n, \pi(y) S \vp_n} d\mu(x) \\
            &= \int_E \sumn \ip{S \alpha_{xy}(S) \vp_n, \vp_n} d\mu(x) \\
            &= \int_E \tr(S \alpha_{xy}(S)) d\mu(x) \vphantom{\sumn}\\
            &= \tr(S) \int_E h_S(xy) d\mu(x),
    \end{align*}
    where interchanging the sum and the integral is justified by the dominated convergence theorem because the integrand is bounded by $\n{S}_\B \n{S}_\Tc$ and $E$ has finite measure. Using the associativity from Lemma \ref{la:foconvcited} (iii),
    \begin{align*}
        \tr((E * S)^2) 
            &= ((E * S) * (E * S))(e) \\
            &= (\chi_E * (S * (E * S)))(e) \\
            &= \tr(S) \left(\chi_E * \left(\int_E h_S(x(\cdot)) d\mu(x)\right)\right)(e) \\
            &= \tr(S) \int_E \int_E h_S(xy^{-1}) d\mu(x) d\mu(y).
    \end{align*}
    Right invariance of the inner integral and an application of Fubini's theorem then yield
    \begin{align*}
        \tr((E * S)^2) 
            &= \tr(S) \int_E \int_G \chi_E(xy) h_S(x) d\mu(x) d\mu(y) \\
            &= \tr(S) \int_G h_S(x) \int_E \chi_E(xy) d\mu(y) d\mu(x) \\
            &= \tr(S) \int_G h_S(x) \mu(E \cap x^{-1}E) d\mu(x).
    \end{align*}
\end{proof}

Now we can finally prove the relation between the asymptotic behavior of the eigenvalues of the operator sequence $\seqk{(E_k * S)}$ and fundamental properties of the underlying group.

\begin{theorem}\label{thm:main}
    Let $G$ be a locally compact, connected group with a square-integrable, irreducible representation $\pi$ on a separable Hilbert space $\Hi$. Let $S$ be a density operator on $\Hi$ and $\seqk{E_k} \subseteq \mathfrak{B}(G)$ a sequence of sets such that $0 < \mu(E_k) < \infty$ for all $k \in \N$. Then, for each $k \in \N$, $E_k * S$ is non-negative and trace class with eigenvalues $\seqn{\lambda_n^{(k)}} \subseteq [0, 1]$. Further, the following two statements are equivalent.
    \begin{enumerate}[(a)]
        \item $G$ is unimodular and $\seqk{E_k^{-1}}$ is a Følner sequence.
        \item For every $\delta \in (0, 1)$, we have
        \[
            \limk \frac{\#\set{n \mid \lambda_n^{(k)} > 1-\delta}}{\tr(S) \mu(E_k)} = 1.
        \]
    \end{enumerate}
\end{theorem}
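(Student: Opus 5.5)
The preliminary claims come directly from Section \ref{sec:opconv}. Write $T_k := E_k * S$.

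Since $\chi_{E_k} \in L^1(G)$ and $S \in \Tc$, Lemma \ref{la:foconvcited} (i) shows that $T_k$ is trace class with $\tr(T_k) = \tr(S)\mu(E_k) > 0$. Lemma \ref{la:foconvcited} (ii) shows that $T_k$ is non-negative. Viewing $\chi_{E_k} \in L^\infty(G)$ and $S \in \A$, the bound \eqref{eq:foconv2bound} gives $\n{T_k}_\B \le \tr(\D S \D) = 1$. Hence all eigenvalues lie in $[0,1]$.

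Both implications rest on the quantity
\[
    q_k := \frac{\tr(T_k^2)}{\tr(T_k)} = \int_G h_S(x)\, \frac{\mu(E_k \cap x^{-1}E_k)}{\mu(E_k)}\, d\mu(x),
\]
which is Lemma \ref{la:traceofsquare} (ii). The plan is to show that (a) and (b) are each equivalent to $q_k \to 1$, with the extra information about $G$ extracted along the way.

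For (a)$\implies$(b), assume $G$ is unimodular. Right invariance and $\mu(E^{-1}) = \mu(E)$ give
\[
    \frac{\mu(E_k \cap x^{-1}E_k)}{\mu(E_k)} = 1 - \beki(x^{-1}).
\]
Since $\seqk{E_k^{-1}}$ is Følner, this ratio tends to $1$ pointwise and is bounded by $1$. Because $h_S \in L^1(G)$ with integral $1$ (Lemma \ref{la:traceofsquare} (i)), dominated convergence gives $q_k \to 1$. Equivalently,
\[
    \sumn \lambda_n^{(k)}\bigl(1-\lambda_n^{(k)}\bigr) = \tr(T_k) - \tr(T_k^2) = o(\mu(E_k)).
\]
Write $N_k(\delta) := \#\set{n \mid \lambda_n^{(k)} > 1-\delta}$.
\begin{itemize}
    \item Lower bound: $\tr(T_k) \le N_k(\delta) + \sum_{\lambda_n^{(k)} \le 1-\delta} \lambda_n^{(k)}$, and the last sum is at most $\delta^{-1}\sum_n \lambda_n^{(k)}(1-\lambda_n^{(k)}) = o(\mu(E_k))$. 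This gives $\liminf_k N_k(\delta)/(\tr(S)\mu(E_k)) \ge 1$.
    \item Upper bound: for any $\delta' \le \delta$, $N_k(\delta) \le N_k(\delta') + \#\set{n \mid 1-\delta < \lambda_n^{(k)} \le 1-\delta'}$. The second count is at most $\sum_n \lambda_n^{(k)}(1-\lambda_n^{(k)})/((1-\delta)\delta') = o(\mu(E_k))$. Also $N_k(\delta') \le \tr(T_k)/(1-\delta')$. Letting $\delta' \downarrow 0$ gives $\limsup \le 1$.
\end{itemize}

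For (b)$\implies$(a), apply Lemma \ref{la:boundtrrhot} with $\rho(t) = t(1-t)$. This $\rho$ satisfies $\abs{\rho(t)} \le t$ and $\sup_{1-\delta<t\le1}\rho(t) \le \delta$. Combined with (b), it gives $\limsup_k (1-q_k) \le 1 - (1-2\delta)$, so $q_k \to 1$ since $\delta$ is arbitrary.

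Next, using $\mu(E_k \cap x^{-1}E_k) \le \min(\mu(E_k), \Delta_G(x)\mu(E_k))$, the integrand satisfies $0 \le h_S(x)\bigl(1 - \mu(E_k\cap x^{-1}E_k)/\mu(E_k)\bigr)$, and its integral is $1 - q_k \to 0$.

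\emph{Unimodularity.} $h_S$ is continuous with $h_S(e) > 0$, so $h_S \ge c > 0$ on a neighbourhood $U$ of $e$. If $\Delta_G \not\equiv 1$, then $\Delta_G$ is non-trivial on $U$, because $U$ generates the connected group $G$. Replacing $x$ by $x^{-1}$ if necessary, the set $U \cap \set{\Delta_G < 1}$ is a non-empty open set. On it the integrand is at least $c(1-\Delta_G(x))$, which is strictly positive and independent of $k$. This contradicts $1-q_k \to 0$, so $G$ is unimodular.

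\emph{Følner property.} Once $G$ is unimodular, the convergence reads $\int_G h_S(x)\beki(x^{-1})\,d\mu(x) \to 0$. This is only $L^1$-convergence, so it does not directly give pointwise convergence on a neighbourhood, and that upgrade is the step I expect to be the main obstacle.

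To get around it, I will copy the averaging trick from the proof of Proposition \ref{prop:folnerequiv}. Put $\beta_k := \beki$. By Lemma \ref{la:betaineq} and the symmetry $\beta_k(y^{-1}) = \beta_k(y)$ (valid in the unimodular case), $\beta_k(x) \le \beta_k(y) + \beta_k(y^{-1}x)$. Choose a compact neighbourhood $K \subseteq U$ of $e$ and a neighbourhood $W$ of $e$ with $K^{-1}W \subseteq U$. Averaging over $y \in K$ gives, for every $x \in W$,
\[
    \mu(K)\beta_k(x) \le \int_U \beta_k\, d\mu + \int_U \beta_k\, d\mu \le \frac{2}{c}\int_G h_S \beta_k\, d\mu \longrightarrow 0,
\]
where symmetry of $h_S$ lets us replace $\beta_k(x^{-1})$ by $\beta_k(x)$ in the integral. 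Hence $\beta_k \to 0$ uniformly on $W$. Proposition \ref{prop:folnerequiv} (a)$\implies$(c) then shows that $\seqk{E_k^{-1}}$ is a Følner sequence.
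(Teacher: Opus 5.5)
Your proposal is correct and follows essentially the same route as the paper. Both directions hinge on $q_k = \tr(T_k^2)/\tr(T_k)$ from Lemma \ref{la:traceofsquare} (ii), using dominated convergence for (a)$\implies$(b), Lemma \ref{la:boundtrrhot} with $\rho(t)=t(1-t)$ for (b)$\implies$(a), positivity of $h_S$ near $e$ to force $\Delta_G\equiv 1$, and the averaging inequality from Proposition \ref{prop:folnerequiv} to upgrade $L^1$-convergence to uniform convergence near $e$. The differences are minor: the paper obtains the counting estimate in (a)$\implies$(b) in one line from $\abs{\theta}\le\sigma$, giving $\abs{C_\delta^{(k)}-1}\le m_\delta(1-q_k)$, where you use a two-sided estimate with an auxiliary $\delta'$. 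Your unimodularity argument avoids the paper's passage to an a.e.-convergent subsequence, but it needs $U$ to be chosen symmetric, which is possible because $h_S$ is symmetric.
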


The proof of the first implication is essentially an application of an idea by Halvdansson \cite[Lemma 5.19]{Halvdansson}.

\begin{proof}
    First of all, we denote 
    \[
        C_\delta^{(k)} := \frac{\#\set{n \mid \lambda_n^{(k)} > 1-\delta}}{\tr(S) \mu(E_k)}
    \]
    for $\delta \in (0, 1)$ and $k \in \N$. The properties of each individual $E_k * S$ follow from Lemma \ref{la:foconvcited} and the bound \eqref{eq:foconv2bound}, which takes the form
    \[
        \n{E_k * S}_\B \leq \n{\chi_{E_k}}_{L^\infty(G)} \n{\D S \D}_\Tc \leq 1
    \]
    in this case because $S$ is a density operator. Now assume (a), fix $\delta \in (0, 1)$ and $k \in \N$, and consider the function
    \[
        \theta : [0, 1] \rightarrow [ \delta - 1, \delta), \; t \mapsto
        \begin{cases}
            -t, \quad& \text{if} \; \, 0 \leq t \leq 1 - \delta \\
            1 - t, & \text{if} \; \, 1 - \delta < t \leq 1
        \end{cases}.
    \]
    $\theta$ is bounded, Borel-measurable, and preserves summability of sequences in $[0, 1]$ since the terms of any $\ell^1$-sequence are eventually in $[0, 1-\delta]$, where we have $\abs{\theta(t)} = t$. Thus, the operator $\theta(E_k * S)$ is well-defined and trace class, and we have
    \[
        \tr(\theta(E_k * S)) = \sumn \theta(\lambda_n^{(k)}) = \#\set{n \mid \lambda_n^{(k)} > 1-\delta} - \sumn \lambda_n^{(k)},
    \]
    where
    \[
        \sumn \lambda_n^{(k)} = \tr (E_k * S) = \mu(E_k) \tr(S)
    \]
    by Lemma \ref{la:foconvcited} (i). As a polynomial upper bound for $\abs{\theta}$, we consider
    \[
        \sigma : [0, 1] \rightarrow \R, \; t \mapsto m_\delta \, t(1-t),
    \]
    where $m_\delta := \max\set{\delta^{-1}, (1-\delta)^{-1}}$. Indeed, for $t \in [0, 1-\delta]$,
    \[
        \abs{\theta(t)} = t \leq t \cdot \frac{1-t}{\delta} \leq \sigma(t),
    \]
    and for $t \in (1 - \delta, 1]$,
    \[
        \abs{\theta(t)} = 1-t \leq (1-t) \frac{t}{1-\delta} \leq \sigma(t).
    \]
    Thus, $\abs{\theta} \leq \sigma$, and we have
    \begin{align*}
        &\abs{\#\set{n \mid \lambda_n^{(k)} > 1-\delta} - \mu(E_k) \tr(S)} \\
        = \, &\abs{\tr (\theta(E_k * S))} \vphantom{\int_G} \\
        \le \, &\tr(\abs{\theta}(E_k * S)) \\
        \le \, &\tr(\sigma(E_k * S)) \vphantom{\int_G} \\
        = \, &m_\delta \left(\tr(E_k * S) - \tr((E_k * S)^2)\right) \\
        = \, &m_\delta \left(\mu(E_k) \tr(S) - \tr(S) \int_G h_S(x) \mu(E_k \cap x^{-1}E_k) d\mu(x)\right),
    \end{align*}
    where we used Lemma \ref{la:traceofsquare} (ii) in the last step. Dividing by $\tr(S) \mu(E_k)$ yields
    \begin{equation}\label{eq:boundc}
        \abs{C_\delta^{(k)} - 1} \leq m_\delta \left(1 - \int_G h_S(x) \frac{\mu(E_k \cap x^{-1}E_k)}{\mu(E_k)} d\mu(x)\right).
    \end{equation}
    Because $G$ is unimodular, the Haar measure is invariant under inversion, so
    \[
        \frac{\mu(E_k \cap x^{-1}E_k)}{\mu(E_k)} = \frac{\mu(E_k^{-1} \cap E_k^{-1}x)}{\mu(E_k^{-1})},
    \]
    and the right hand side is bounded by $1$ and converges pointwise to $1$ because $\seqk{E_k^{-1}}$ is a Følner sequence. Since $h_S$ is integrable with total integral $1$ by Lemma \ref{la:traceofsquare} (i), the dominated convergence theorem implies
    \[
        \limk \int_G h_S(x) \frac{\mu(E_k \cap x^{-1}E_k)}{\mu(E_k)} d\mu(x) = \int_G h_S(s) d\mu (x) = 1.
    \]
    Together with \eqref{eq:boundc}, we obtain (b).

    Now, we assume (b) and apply Lemma \ref{la:boundtrrhot} to each operator $E_k * S$ and the function $\rho: [0, 1] \to \R, t \mapsto t(1-t)$. For every $\delta \in (0, 1/2)$, we have $\sup_{1-\delta < t \le 1} \rho(t) = (1-\delta)\delta$, so, keeping in mind that $\tr(E_k * S) = \mu(E_k) \tr(S)$ and that $\int_G h_S d\mu = 1$, we obtain
    \begin{align*}
        0 
            &\le \int_G h_S(x) \left(1 - \frac{\mu(E_k \cap x^{-1}E_k)}{\mu(E_k)}\right) d\mu(x) \\
            &= 1 - \mu(E_k)^{-1} \int_G h_S(x) \mu(E_k \cap x^{-1}E_k) d\mu(x) \\
            &= \frac{\tr(E_k * S) - \tr((E_k * S)^2)}{\tr(E_k * S)} \\
            &= \frac{\tr(\rho(E_k * S))}{\tr(E_k * S)} \\
            &\le 1 - C_\delta^{(k)} (1 - \delta - (1-\delta)\delta) \\
            &= 1 - (1-\delta)^2C_\delta^{(k)},
    \end{align*}
    where we used Lemma \ref{la:traceofsquare} (ii) in the third line and Lemma \ref{la:boundtrrhot} in the fifth. By assumption, the very right hand side converges to $2\delta - \delta^2$ as $k \to \infty$, and $\delta$ can be taken to be arbitrarily small. Thus, if we denote
    \[
        \tilde{\beta}_{E_k}: G \to [0, 1], \; x \mapsto 1 - \frac{\mu(E_k \cap x^{-1}E_k)}{\mu(E_k)}
    \]
    for every $k \in \N$, then
    \begin{equation}\label{eq:inttozero1}
        \limk \int_G h_S \tilde{\beta}_{E_k} d\mu(x) = 0.
    \end{equation}
    As the integrand is non-negative, we can find a subsequence $(\tilde{\beta}_{E_{k_j}})_{j \in \N}$ such that $(h_S \tilde{\beta}_{E_{k_j}})_{j \in \N}$ converges to $0$ pointwise almost everywhere. Since $h_S$ is continuous with $h_S(e) > 0$ by Lemma \ref{la:traceofsquare} (i), there exists a symmetric neighborhood $V$ of $e$ in $G$ such that $h_S$ is strictly positive on $V$. Thus, $(\tilde{\beta}_{E_{k_j}})_{j \in \N}$ converges to $0$ pointwise almost everywhere on $V$. Further, for all $x \in G$ and $j \in \N$, we have
    \[
        1 - \Delta_G(x) = 1 - \frac{\mu(x^{-1}E_{k_j})}{\mu(E_{k_j})} \le \tilde{\beta}_{E_{k_j}}(x).
    \]
    Letting $j \to \infty$ gives $\Delta_G(x) \ge 1$ almost everywhere on $V$, and, by continuity of $\Delta_G$, even everywhere on $V$. But since $V$ is symmetric, we then have $\Delta_G (x) \ge 1$ and $\Delta_G(x)^{-1} = \Delta_G(x^{-1}) \ge 1$ for each $x \in V$, so $(\Delta_G)|_V \equiv 1$. $\Delta_G$ is a homomorphism and $V$ generates the connected group $G$, thus $\Delta_G \equiv 1$ and $G$ is unimodular. With this, we know that
    \[
        \tilde{\beta}_{E_k}(x) = \beki(x^{-1})
    \]
    for all $x \in G$, so we substitute the inverse in \eqref{eq:inttozero1} and use the symmetry of $h_S$ to obtain
    \[
        \limk \int_G h_S \beki d\mu = 0.
    \]
    Now choose $\varepsilon > 0$ and a neighborhood $W$ of $e$ such that $h_S|_W \ge \varepsilon$, hence
    \begin{equation}\label{eq:inttozero2}
        \limk \int_W \beki d\mu \le \limk \frac{1}{\varepsilon} \int_W h_S \beki d\mu = 0.
    \end{equation}
    Further, let $K$ be a compact, symmetric neighborhood of $e$ such that $K^2 \subseteq W$. Reusing the inequality \eqref{eq:uniformbetaineq} from the proof of Proposition \ref{prop:folnerequiv}, we have
    \begin{align*}
        \beki(x) 
            &\le \mu(K)^{-1} \left(\int_K \beki d\mu + \left(\max_{y \in KK^{-1}} \Delta_G(y)\right) \int_{KK^{-1}} \beki d\mu\right) \\
            &\le \mu(K)^{-1} \left(1 + \max_{y \in KK^{-1}} \Delta_G(y)\right) \int_W \beki d\mu
    \end{align*}
    for all $x \in K$, because $K \subseteq KK^{-1} = K^2 \subseteq W$. Thus, $\seqk{E_k^{-1}}$ is pointwise Følner on $K$ by \eqref{eq:inttozero2}, and Proposition \ref{prop:folnerequiv} then asserts that $\seqk{E_k^{-1}}$ is already a Følner sequence.
\end{proof}

In particular, for a connected group $G$ with a square-integrable, irreducible representation, asymptotic eigenvalue behavior as described in (b) is possible if and only if $G$ is unimodular and amenable. This means that the claim \cite[Theorem 5.14]{Halvdansson} for the affine group cannot hold.

\begin{remark}\label{rm:maindisconnected}
    For the sake of an equivalent characterization, we assumed $G$ to be connected. However, the implication (a)$\implies$(b) still holds if this is not the case. Further, if $G$ is only almost connected (i.e. if $G/G_0$ is compact, where $G_0$ is the identity component), then (b) still implies unimodularity, but $\seqk{E_k^{-1}}$ is only guaranteed to be a Følner sequence on $G_0$ by Remark \ref{rm:folnerdisconnected}.
\end{remark}

\begin{remark}\label{rm:mainmodz}
    This section again is compatible with the scenario where $G$ is a simply connected, connected, nilpotent Lie group with a representation that is only square-integrable modulo the center $Z$. The concept of a density operator as introduced in Definition \ref{def:densityop} even becomes significantly easier since $G$ is unimodular; the density operators are precisely the non-negative trace class operators $S$ satisfying $\tr(S) = d^2$, where $d$ is the constant from Theorem \ref{thm:duflomooremodz}. Lemma \ref{la:boundtrrhot} does not mention the group at all and Lemma \ref{la:traceofsquare} only uses properties of operator convolutions and of $\alpha$ that we already showed to be transferrable in Section \ref{sec:opconv}. Finally, note that the proof of Theorem \ref{thm:main} only relies on properties of operator convolutions and the representation implicitly through Lemma \ref{la:traceofsquare}, thus it is immediately possible to apply Theorem \ref{thm:main} to the modified operator convolutions on $G/Z$ discussed in Remark \ref{rm:convmodz}.
\end{remark}

%%%%%%%%%%%%%%%%%%%%%%%%%%%%%%%%%%%%%%%%%%%%%%%%%%%%%%%%%%%%%%%%%%%%%%%
%%%%%%%%%%%%%%%%%%%%%%%%%%%%%%%%%%%%%%%%%%%%%%%%%%%%%%%%%%%%%%%%%%%%%%%

\section{Applications to nilpotent Lie groups}\label{sec:appl}

In this section, we want to turn to a setting where the first implication of Theorem \ref{thm:main} gives a positive result. From here, let $G$ be a simply connected, connected, nilpotent Lie group with center $Z$ and $\pi$ an irreducible representation of $G$ on a Hilbert space $\Hi$ such that $\pi$ is square-integrable modulo $Z$. Let $\dot{\mu}$ denote the Haar measure on the quotient $G/Z$, which is again a simply connected, connected, nilpotent Lie group.

\subsection{Application to word metric balls}\label{sub:applwordmetric}

The quotient $G/Z$ is unimodular and, by Lemma \ref{la:nilpotentfolner}, we know Følner sequences in $G/Z$, thus Theorem \ref{thm:main} is applicable. With Remark \ref{rm:mainmodz} in mind and $d>0$ being the constant from Theorem \ref{thm:duflomooremodz}, we state the result for nilpotent Lie groups explicitly.

\begin{theorem}\label{thm:nilpotentmain}
    Let $S$ be a density operator on $\Hi$ and $V$ a compact, symmetric neighborhood of the identity in $G/Z$. If $\seqn{\lambda_n^{(k)}}$ denote the eigenvalues of the operator $V^k * S$, then
    \[
        \limk \frac{\#\set{n \mid \lambda_n^{(k)} > 1-\delta}}{d^2 \dot{\mu}(V^k)} = 1
    \]
    for every $\delta \in (0, 1)$.
\end{theorem}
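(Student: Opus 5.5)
We obtain this statement by applying the implication (a)$\implies$(b) of Theorem \ref{thm:main} to the group $G/Z$, equipped with the modified operator convolutions of Remark \ref{rm:convmodz}. Remark \ref{rm:mainmodz} guarantees that the whole of Section \ref{sec:main} transfers to this setting. So the work is to verify the hypotheses of (a) for $G/Z$ and the sets $E_k := V^k$, and then to identify the normalising constant $\tr(S)$ with $d^2$.

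First, $G/Z$ is again a simply connected, connected, nilpotent Lie group. It is therefore connected and unimodular, and it is $\sigma$-compact as a finite-dimensional Lie group. Each $V^k$ is compact, hence $\dot{\mu}(V^k) < \infty$. Since $V$ is a neighborhood of the identity, $\dot{\mu}(V^k) \ge \dot{\mu}(V) > 0$. Because $V$ is symmetric, $(V^k)^{-1} = V^k$, so the condition that $\seqk{(V^k)^{-1}}$ be a Følner sequence reduces to Lemma \ref{la:nilpotentfolner} applied in $G/Z$. This gives (a), so the theorem yields
\[
    \limk \frac{\#\set{n \mid \lambda_n^{(k)} > 1-\delta}}{\tr(S)\,\dot{\mu}(V^k)} = 1
\]
for all $\delta \in (0,1)$.

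It remains to replace $\tr(S)$ by $d^2$. In the mod-$Z$ setting the Duflo--Moore operator is the scalar $d$ (Theorem \ref{thm:duflomooremodz}). Every bounded operator is then admissible, with $\D S \D = d^{-2} S$. The density condition $\tr(\D S \D) = 1$ therefore reads $\tr(S) = d^2$, as already noted in Remark \ref{rm:mainmodz}. Substituting this gives the claim.

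The only step requiring real care is that the transferred version of Theorem \ref{thm:main}, in particular the identity $\int_{G/Z} h_S \, d\dot{\mu} = 1$ and the bound $\n{V^k * S}_\B \le 1$, genuinely holds with the constant $d$. This depends on Theorem \ref{thm:ooconvl1} surviving the passage to $\dot{\alpha}$, where the orthogonality relations of Theorem \ref{thm:duflomooremodz} replace those of Theorem \ref{thm:duflomoore}, with $\Delta \equiv 1$ and $D = d$. I expect this to be the main obstacle, although Remark \ref{rm:convmodz} already asserts it, so the proof should amount to citing it.
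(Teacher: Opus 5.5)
Your proposal is correct and matches the paper's argument: apply (a)$\implies$(b) of Theorem \ref{thm:main} on $G/Z$ via Remark \ref{rm:mainmodz}, using unimodularity of $G/Z$, Lemma \ref{la:nilpotentfolner}, the symmetry $(V^k)^{-1} = V^k$, and $\tr(S) = d^2$. One small slip: because $\D$ is the scalar $d^{-1}$, an operator is admissible exactly when it is trace class, so it is not true that every bounded operator is admissible; this does no harm, since a density operator is trace class by definition.
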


Note that inversion of the Følner sequence is not necessary here since $V$ is assumed to be symmetric.

%%%%%%%%%%%%%%%%%%%%%%%%%%%%%%%%%%%%%%%%%%%%%%%%%%%%%%%%%%%%%%%%%%%%%%%

\subsection{Application to dilations}\label{sub:appldilations}

Analogously to the previous subsection, we treat the case where additionally, the quotient $G/Z$ is homogeneous, as discussed in Section \ref{sub:homogeneous}. Let $(\dot{\Gamma}_r)_{r>0}$ denote the dilations on $G/Z$ and $\dot{Q}$ the homogeneous dimension. Then we can apply Theorem \ref{thm:main} to the Følner sequences constructed in Lemma \ref{la:homogeneousfolner} to obtain the following.

\begin{theorem}\label{thm:homogeneousmain}
    Let $S$ be a density operator on $\Hi$, $E \subseteq G/Z$ satisfy $0 < \dot{\mu}(E) < \infty$, and $\seqk{r_k}$ be a sequence of positive real numbers with $\limk r_k \to \infty$. If $\seqn{\lambda_n^{(k)}}$ denote the eigenvalues of the operator $\dot{\Gamma}_{r_k}(E) * S$, then
    \[
        \limk \frac{\#\set{n \mid \lambda_n^{(k)} > 1-\delta}}{d^2 \dot{\mu}(E) r_k^{\dot{Q}}} = 1
    \]
    for every $\delta \in (0, 1)$.
\end{theorem}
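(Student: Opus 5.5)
This follows from the implication (a)$\implies$(b) of Theorem \ref{thm:main}, applied to the group $G/Z$ with the modified operator convolutions of Remark \ref{rm:convmodz}. Remark \ref{rm:mainmodz} guarantees that Theorem \ref{thm:main} transfers to that setting. Two hypotheses of (a) need checking: that $G/Z$ is unimodular, and that the sequence $(\dot{\Gamma}_{r_k}(E)^{-1})_{k \in \N}$ is a Følner sequence in $G/Z$.

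Unimodularity is immediate. $G/Z$ is a simply connected, connected, nilpotent Lie group (Section \ref{sub:nilpotent}), so it is unimodular, and it is also connected and $\sigma$-compact.

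For the Følner property, set $E_k := \dot{\Gamma}_{r_k}(E)$. Each $\dot{\Gamma}_{r_k}$ is a group automorphism by Theorem \ref{thm:dilations} (i), so it commutes with inversion, and therefore $E_k^{-1} = \dot{\Gamma}_{r_k}(E^{-1})$. Unimodularity makes the Haar measure inversion-invariant, so $0 < \dot{\mu}(E^{-1}) = \dot{\mu}(E) < \infty$. Lemma \ref{la:homogeneousfolner}, applied to the set $E^{-1}$, then shows that $(E_k^{-1})_{k\in\N}$ is a Følner sequence. Note that the Følner sequence itself must be inverted here, unlike in Theorem \ref{thm:nilpotentmain}, because $E$ need not be symmetric.

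Theorem \ref{thm:main} now gives
\[
    \limk \frac{\#\set{n \mid \lambda_n^{(k)} > 1-\delta}}{\tr(S)\,\dot{\mu}(E_k)} = 1
\]
for every $\delta \in (0,1)$. Two substitutions remain. First, by Remark \ref{rm:mainmodz}, density operators in this setting satisfy $\tr(S) = d^2$. Second, Theorem \ref{thm:dilations} (iv) gives $\dot{\mu}(E_k) = r_k^{\dot{Q}}\dot{\mu}(E)$. Substituting both yields the claimed limit.

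The only point needing any care is the transfer of the whole framework to $G/Z$. This is already handled by Remarks \ref{rm:convmodz} and \ref{rm:mainmodz}, so no genuine obstacle is expected.
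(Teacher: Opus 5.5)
Your proposal is correct and matches the paper's argument: apply (a)$\implies$(b) of Theorem \ref{thm:main} on $G/Z$ via Remark \ref{rm:mainmodz}, get the Følner property of $(\dot{\Gamma}_{r_k}(E)^{-1})_{k \in \N} = (\dot{\Gamma}_{r_k}(E^{-1}))_{k \in \N}$ from Lemma \ref{la:homogeneousfolner}, and substitute $\tr(S) = d^2$ and $\dot{\mu}(\dot{\Gamma}_{r_k}(E)) = r_k^{\dot{Q}}\dot{\mu}(E)$. Your remark that the sequence "must be inverted" is the same observation the paper states as "inversion is not necessary": the inverted sequence is automatically Følner, so no extra hypothesis on $E$ is needed.
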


Inversion of the Følner sequence is again not necessary since $\seqk{\dot{\Gamma}_{r_k}(E)^{-1}} = \seqk{\dot{\Gamma}_{r_k}(E^{-1})}$ is a Følner sequence as well by Lemma \ref{la:homogeneousfolner}. Also, the discrete limit may be replaced with the continuous $\lim_{r \to \infty}$, as we consider arbitrary real sequences that tend to $\infty$.

At this point, we want to consider, as a special case, the Heisenberg group $\mathbb{H}^1$, which is the the lowest-dimensional non-abelian simply connected, connected, nilpotent Lie group. We define it here as the manifold $\R^3$ with the group law
\[
    (x, y, z) (x', y', z') := (x + x', y + y', z + z' + x y').
\]
The center $Z$ is then given by the subgroup of all elements of the form $(0, 0, z)$, $z \in \R$, so that $\mathbb{H}^1/Z \cong \R^2$ is the 2-dimensional abelian Lie group. In particular, the Haar measure on $\mathbb{H}^1/Z$ is the 2-dimensional Lebesgue measure denoted by $\lambda_2$, and the dilations are given by $\dot{\Gamma}_r (x, y) = (rx, ry)$ for $(x, y) \in G/Z$ and $r > 0$. A suitable representation is the \newterm{Schrödinger representation} $\pi$ of $\mathbb{H}^1$ on the Hilbert space $L^2(\R)$ defined by
\[
    \pi(x, y, z)\psi(u) := e^{2\pi i(z + y u)} \psi(u + x)
\]
for $\psi \in L^2 (\R)$, $(x, y, z) \in \mathbb{H}^1$, and $u \in \R$. This can be shown to be irreducible and square-integrable modulo $Z$ with $d = 1$. Thus, Theorem \ref{thm:homogeneousmain} takes the following form: Let $S \in \mathcal{S}^1(L^2(\R))$ with $\tr(S) = 1$ and $E \subseteq \R^2$ with $0 < \lambda_2(E) < \infty$. If $\seqn{\lambda_n^{(r)}}$ denotes the eigenvalue sequence of the operator $rE * S$, then
\[
    \lim_{r \to \infty} \frac{\#\set{n \mid \lambda_n^{(r)} > 1-\delta}}{\lambda_2(E) r^2} = 1.
\]
This particular fact was also shown in  \cite[Corollary 2.2]{MainResHeisenberg1} using a different, more general approach, however only in the case $S = \vp \otimes \vp$ for $\n{\vp}_{L^2(\R)} = 1$. It was generalized to arbitrary density operators in \cite[Theorem 4.4]{MainResHeisenberg2} using a method similar to ours, restricted to $\mathbb{H}^1$ with the Schrödinger representation.

%%%%%%%%%%%%%%%%%%%%%%%%%%%%%%%%%%%%%%%%%%%%%%%%%%%%%%%%%%%%%%%%%%%%%%%
% END OF CONTENT %
%%%%%%%%%%%%%%%%%%%%%%%%%%%%%%%%%%%%%%%%%%%%%%%%%%%%%%%%%%%%%%%%%%%%%%%

\section*{Data availability}

No data was used for the research described in this article.

\section*{Acknowledgments}

I would like to thank Hartmut Führ for the close guidance and supervision and the helpful suggestions during the writing of this paper and the preceding Master's thesis.

\bibliographystyle{elsarticle-harv}
\bibliography{references}

\end{document}